\documentclass[a4paper,11pt]{article}
\usepackage[latin1]{inputenc}
\usepackage[english]{babel}
\usepackage{amsmath}
\usepackage{amsfonts}
\usepackage{amssymb}
\usepackage{epsfig}
\usepackage{amsopn}
\usepackage{amsthm}
\usepackage{color}
\usepackage{graphicx}
\usepackage{enumerate}
\usepackage{mathrsfs}
\usepackage{cite}
\parindent=4pt
\parskip=2pt
\addtolength{\hoffset}{-1cm} \addtolength{\textwidth}{2.4cm}
\addtolength{\voffset}{-1cm} \addtolength{\textheight}{2cm}
\newtheorem{theorem}{Theorem}[section]
\newtheorem{corollary}[theorem]{Corollary}
\newtheorem{lemma}[theorem]{Lemma}
\newtheorem{proposition}[theorem]{Proposition}

\newtheorem{remark}[theorem]{Remark}
\newtheorem*{theorem*}{Theorem}
\newtheorem*{lemma*}{Lemma}
\newtheorem*{remark*}{Remark}
\newtheorem*{definition*}{Definition}
\newtheorem*{proposition*}{Proposition}
\newtheorem*{corollary*}{Corollary}
\numberwithin{equation}{section}
%

\newcommand{\real}{\mathbb{R}}



\let\ced=\c         







\def\qed{\,\unskip\kern 6pt \penalty 500
\raise -2pt\hbox{\vrule \vbox to8pt{\hrule width 6pt
\vfill\hrule}\vrule}\par}
\definecolor{darkblue}{rgb}{0.05, .05, .65}
\definecolor{darkgreen}{rgb}{0.1, .65, .1}
\definecolor{darkred}{rgb}{0.8,0,0}
\newcommand{\beqn}{\begin{equation}}
\newcommand{\eeqn}{\end{equation}}
\newcommand{\bear}{\begin{eqnarray}}
\newcommand{\eear}{\end{eqnarray}}
\newcommand{\bean}{\begin{eqnarray*}}
\newcommand{\eean}{\end{eqnarray*}}
%


\begin{document}

\title{\huge \bf A Hardy-H\'enon equation in $\mathbb{R}^N$ with sublinear absorption}

\author{
\Large Razvan Gabriel Iagar\,\footnote{Departamento de Matem\'{a}tica
Aplicada, Ciencia e Ingenieria de los Materiales y Tecnologia
Electr\'onica, Universidad Rey Juan Carlos, M\'{o}stoles,
28933, Madrid, Spain, \textit{e-mail:} razvan.iagar@urjc.es}
\\[4pt] \Large Philippe Lauren\ced{c}ot\,\footnote{Laboratoire de Math\'ematiques (LAMA) UMR 5217, Universit\'e Savoie-Mont Blanc, CNRS, F-73000, Chamb\'ery France. \textit{e-mail:} philippe.laurencot@univ-smb.fr}\\ [4pt] }
\date{\today}
\maketitle

\begin{abstract}
Consider $m>1$, $N\ge 1$ and $\max\{-2,-N\}<\sigma<0$. The Hardy-H\'enon equation with sublinear absorption
$$
	- \Delta v(x) - |x|^\sigma v(x) + \frac{1}{m-1} v^{1/m}(x)= 0, \qquad x\in\mathbb{R}^N,
$$
is shown to have at least one solution $v\in H^1(\mathbb{R}^N)\cap L^{(m+1)/m}(\mathbb{R}^N)$, which is non-negative and radially symmetric with a non-increasing profile. In addition, any such solution is compactly supported, bounded and enjoys the better regularity $v\in W^{2,q}(\mathbb{R}^N)$ for $q\in [1,N/|\sigma|)$. A key ingredient in the proof is a particular case of the celebrated Caffarelli-Kohn-Nirenberg inequalities, for which we obtain the existence of an extremal function which is non-negative, bounded, compactly supported and radially symmetric with a non-increasing profile.

A by-product of these results is the existence of compactly supported separate variables solutions to a porous medium equation with a spatially dependent source featuring a singular coefficient.
\end{abstract}

\bigskip

\noindent {\bf MSC Subject Classification 2020:} 35C06, 35J15, 35J20, 35J75, 35K65.

\smallskip

\noindent {\bf Keywords and phrases:} Hardy-H\'enon equation, singular potential, variational methods, bounded solutions, Caffarelli-Kohn-Nirenberg inequality, separate variables solutions.

\section{Introduction and results}

After being proposed by H\'enon \cite[Eq.~(A.6)]{He73} as a model for studying rotating stellar systems, the elliptic equation
\begin{equation}\label{eq0}
	-\Delta u(x)=|x|^{\sigma} u^p(x), \qquad x\in\mathbb{R}^N,
\end{equation}
with $p>1$, featuring a variable coefficient which is singular when $\sigma<0$ and unbounded when $\sigma>0$, became an interesting object of study for mathematicians. It is nowadays usually referred to as the Hardy equation if $\sigma<0$ and the H\'enon equation if $\sigma>0$, though both cases are considered in \cite{He73} (with a constraint corresponding to $\sigma>-2$). When $\sigma<0$, the presence of the singular potential implies a number of difficulties related to the existence and regularity of its solutions. Indeed, existence of solutions is established for Eq.~\eqref{eq0} with $N\ge 3$, $\sigma>-2$, and $p>p_S(\sigma)$ in works by Ni and his collaborators \cite{Ni82, Ni86, NY88}, where $p_S(\sigma):=(N+2\sigma+2)/(N-2)$ is the Sobolev critical exponent. In contrast, the existence issue is still not completely solved in the range $1<p<p_S(\sigma)$. New and rather complete results concerning existence of solutions are available in the recent paper \cite{GN22}, see also the references therein. Moreover, many new developments on the existence and the functional analysis of solutions to the parabolic counterpart of~\eqref{eq0}, namely
$$
	\partial_tu=\Delta u+|x|^{\sigma}u^p, \qquad (t,x)\in (0,\infty)\times\mathbb{R}^N,
$$
also referred to as the parabolic Hardy-H\'enon equation, are published in a number of recent works including \cite{BSTW17, CIT21a, CIT21b, CITT24, HT21, T20}.

Herein, we consider a Hardy-H\'enon equation featuring an additional sublinear absorption term
\begin{equation}\label{eq1}
-\Delta v(x)+\frac{1}{m-1}v^{1/m}(x)=|x|^{\sigma}v(x), \quad x\in\real^N,
\end{equation}
in dimension $N\geq1$ and with exponents
\begin{equation}\label{exp}
m>1 \quad {\rm and} \quad \max\{-2,-N\}<\sigma<0.
\end{equation}
The lower bound for $\sigma$ in~\eqref{exp} might seem strange at first sight, but even for the simpler equation~\eqref{eq0}, it has been shown that $\sigma>-2$ is a necessary condition for existence of solutions (see for example \cite{BC98, MP01}), while $\sigma>-N$ is the necessary condition for integrability in a neighborhood of $x=0$ of a non-negative and locally bounded function $v$ with $v(0)>0$. As seen from the previous discussion, the existence of solutions to Hardy equations is far from being a trivial subject, and we shall actually show in Theorem~\ref{th.3} below that the lower bound~\eqref{exp} on $\sigma$ is sharp in order for our results on~\eqref{eq1} to hold true when $N\ge 2$.

We are now in a position to state the main results of this paper, providing existence and regularity of solutions to~\eqref{eq1} when the parameters $m$ and $\sigma$ satisfy~\eqref{exp}. We start with the existence of at least one weak solution to~\eqref{eq1}, which turns out to be also an extremal function for a Caffarelli-Kohn-Nirenberg (CKN) inequality.

\begin{theorem}\label{th.1}
Let $m$ and $\sigma$ be as in~\eqref{exp}. We introduce
\begin{equation*}
	\mathcal{X} := H^1(\real^N)\cap L^{(m+1)/m}(\real^N) 
\end{equation*}
and
\begin{equation}
	\mathcal{S}(v) := \frac{\|\nabla v\|_2^{(\sigma+2)\theta - \sigma} \|v\|_{(m+1)/m}^{(\sigma+2)(1-\theta)}}{\|v\|_{L^2(\mathbb{R}^N,|x|^\sigma\,dx)}^2}, \qquad v\in \mathcal{X}\setminus\{0\}, \label{quotient}
\end{equation}
where
\begin{equation}
	\theta:=\frac{N(m-1)}{N(m-1)+2(m+1)}\in(0,1). \label{theta}
\end{equation}
There is at least one non-negative radially symmetric function $v_*\in \mathcal{X}$, $v_*\not\equiv 0$, with non-increasing profile, which minimizes $\mathcal{S}$ on $\mathcal{X}\setminus\{0\}$; that is,
\begin{equation}
	\mathcal{S}(v_*) = \inf_{v\in\mathcal{X}\setminus\{0\}} \mathcal{S}(v) > 0, \label{minimizer}
\end{equation}
and $v_*$ is a variational solution to~\eqref{eq1}.
\end{theorem}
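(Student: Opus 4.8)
The plan is to establish \eqref{minimizer} by the direct method of the calculus of variations, exploiting the scaling invariances of the quotient $\mathcal{S}$ to normalize a minimizing sequence, and using symmetric rearrangement to gain compactness. First I would observe that $\mathcal{S}$ is invariant under the dilation $v\mapsto v(\lambda\cdot)$ for $\lambda>0$ and under the scalar multiplication $v\mapsto \mu v$ for $\mu>0$; the specific value of $\theta$ in \eqref{theta} is precisely the one that balances the scaling dimensions of $\|\nabla v\|_2$ and $\|v\|_{(m+1)/m}$ against $\|v\|_{L^2(|x|^\sigma dx)}$, and that $\mathcal{S}$ is well-defined and finite on $\mathcal{X}\setminus\{0\}$ should follow from a Caffarelli--Kohn--Nirenberg interpolation inequality controlling $\|v\|_{L^2(|x|^\sigma dx)}$ by a product of $\|\nabla v\|_2$ and $\|v\|_{(m+1)/m}$ — which is exactly the CKN inequality alluded to in the abstract, so that $\inf\mathcal{S}>0$ is immediate and only the existence of an extremal needs work.

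Next I would take a minimizing sequence $(v_n)_n$ in $\mathcal{X}\setminus\{0\}$. Replacing each $v_n$ by its Schwarz symmetric decreasing rearrangement $v_n^*$ does not increase $\|\nabla v_n\|_2$ (Pólya--Szegő) nor change $\|v_n\|_{(m+1)/m}$, while it does not decrease $\|v_n\|_{L^2(|x|^\sigma dx)}$ since $|x|^\sigma$ is radially symmetric and non-increasing (Hardy--Littlewood rearrangement inequality, using $\sigma<0$); hence $\mathcal{S}(v_n^*)\le\mathcal{S}(v_n)$ and we may assume each $v_n$ is non-negative, radially symmetric, and non-increasing. Then I use the two scalings to normalize, say $\|\nabla v_n\|_2=1$ and $\|v_n\|_{(m+1)/m}=1$, so that $\|v_n\|_{L^2(|x|^\sigma dx)}^2=1/\mathcal{S}(v_n)\to 1/\inf\mathcal{S}$. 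The sequence $(v_n)_n$ is thus bounded in $\mathcal{X}$; extracting a subsequence, $v_n\rightharpoonup v_*$ weakly in $H^1(\real^N)$ and weakly in $L^{(m+1)/m}(\real^N)$, with $v_*$ non-negative, radial, non-increasing.

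The crux is passing to the limit in the denominator, i.e. showing $\|v_n\|_{L^2(|x|^\sigma dx)}\to\|v_*\|_{L^2(|x|^\sigma dx)}$ and $v_*\not\equiv 0$. For a radial non-increasing function the pointwise bound $v_n(x)\le C|x|^{-N/2}\|\nabla v_n\|_2$ away from the origin, combined with $v_n(x)\le C|x|^{-Nm/(m+1)}\|v_n\|_{(m+1)/m}$ near the origin, gives uniform decay both at $0$ and at $\infty$ in the weighted norm; since $-N<\sigma<0$ these tails are integrable uniformly in $n$, so no mass escapes and the weight $|x|^\sigma\,dx$ behaves like a finite measure for this purpose. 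Combined with the local compactness of the embedding $H^1_{\mathrm{rad}}\hookrightarrow L^2_{\mathrm{loc}}$ on annuli, this yields $v_n\to v_*$ in $L^2(\real^N,|x|^\sigma dx)$, whence $\|v_*\|_{L^2(|x|^\sigma dx)}^2=1/\inf\mathcal{S}>0$ and in particular $v_*\not\equiv 0$. Lower semicontinuity of $\|\nabla\cdot\|_2$ and $\|\cdot\|_{(m+1)/m}$ under weak convergence then gives $\mathcal{S}(v_*)\le\liminf\mathcal{S}(v_n)=\inf\mathcal{S}$, so $v_*$ is a minimizer. Finally, writing the Euler--Lagrange equation for $\mathcal{S}$ at $v_*$ and choosing the normalization of $v_*$ appropriately (using again the two scaling freedoms to fix the two multiplicative constants), the stationarity condition becomes precisely \eqref{eq1} in the weak sense, so $v_*$ is a variational solution. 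The main obstacle is the compactness step for the weighted norm: one must rule out both concentration at the singularity $x=0$ and vanishing at infinity, and it is here that the constraint $\max\{-2,-N\}<\sigma<0$ together with the radial non-increasing structure is used decisively.
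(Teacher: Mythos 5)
Your overall architecture (symmetrization, direct method for a normalized radial non-increasing minimizing sequence, strong convergence of the weighted norm, Euler--Lagrange plus scaling) is essentially the paper's, up to one structural difference: the paper minimizes the auxiliary functional $J(v)=\tfrac12\|\nabla v\|_2^2+\tfrac{m}{m^2-1}\|v\|_{(m+1)/m}^{(m+1)/m}$ on the constraint set $\{\mathcal{N}_\sigma^2(v)=1\}$ and only afterwards shows, by optimizing $J$ over dilations, that the constrained minimizer is extremal for $\mathcal{S}$, whereas you use the two scaling freedoms to normalize the numerator and attack the quotient $\mathcal{S}$ directly. Either route is viable and they cost about the same.

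There is, however, a genuine gap in your compactness step at the origin. You propose to rule out concentration at $x=0$ via the pointwise bounds available for radial non-increasing functions, namely $v_n(x)\le C|x|^{-Nm/(m+1)}\|v_n\|_{(m+1)/m}$ and the Strauss-type bound $v_n(x)\le C|x|^{(1-N)/2}\|v_n\|_{H^1}$, claiming that $-N<\sigma<0$ makes the resulting majorant of $|x|^\sigma v_n^2$ integrable near $0$ uniformly in $n$. It does not: the first bound gives the majorant $|x|^{\sigma-2Nm/(m+1)}$, integrable on $B(0,1)$ only if $\sigma>N(m-1)/(m+1)>0$, while the Strauss bound gives $|x|^{\sigma+1-N}$, integrable only if $\sigma>-1$. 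Hence for $N\ge 2$ and $\sigma\in(\max\{-2,-N\},-1]$ no pointwise decay estimate of this kind controls the weighted mass near the origin, and your argument breaks down precisely where the singular weight is strongest. The fix is the one used in the proof of Proposition~\ref{prop.existSD}: choose $\tau\in(\max\{-2,-N\},\sigma)$ and apply the CKN embedding \eqref{CKN} of Lemma~\ref{lem.embed} with exponent $\tau$ to the bounded sequence $(v_n)$, which gives $\sup_n\mathcal{N}_\tau^2(v_n)<\infty$ and therefore
\begin{equation*}
	\int_{B(0,\epsilon)}|x|^\sigma v_n(x)^2\,dx \le \epsilon^{\sigma-\tau}\,\mathcal{N}_\tau^2(v_n) \le C\,\epsilon^{\sigma-\tau},
\end{equation*}
a bound uniform in $n$ which vanishes as $\epsilon\to0$ since $\sigma-\tau>0$; together with $\int_{|x|\ge R}|x|^\sigma v_n^2\,dx\le R^\sigma\sup_n\|v_n\|_2^2$ and Rellich compactness on annuli, this yields the strong convergence in $L^2(\mathbb{R}^N,|x|^\sigma\,dx)$ you need. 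A further, minor omission: the theorem asserts that $v_*$ is a \emph{variational} solution, i.e. the weak formulation of \eqref{eq1} holds against every $\varphi\in H^1(\mathbb{R}^N)$; your Euler--Lagrange step only produces it against $C_0^\infty$ test functions, and one must still check (as the paper does via Sobolev/H\"older estimates) that $x\mapsto|x|^\sigma v_*(x)$, and hence $v_*^{1/m}$, belongs to $H^{-1}(\mathbb{R}^N)$ before concluding by density.
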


Let us first point out that each term involved in the definition~\eqref{quotient} of $\mathcal{S}$ is finite for $v\in\mathcal{X}$, thanks to the continuous embedding of $\mathcal{X}$ in $L^2(\mathbb{R}^N,|x|^\sigma\,dx)$. As we shall see below in Lemma~\ref{lem.embed}, the latter is actually a consequence of a CKN inequality \cite[Theorem]{CKN84}, which also guarantees that the infimum of $\mathcal{S}$ on $\mathcal{X}\setminus\{0\}$ is positive. The original contribution of Theorem~\ref{th.1} is that this infimum is attained by a non-negative radially symmetric function with non-increasing profile solving~\eqref{eq1}. Let us recall that, in recent years, a number of works investigates the existence, properties, and stability of minimizers to CKN inequalities, see for example \cite{CC09, CFLL24, CW01, DE12, DETT11, DLL18, LL17, WW22} to name, but a few.

The proof of Theorem~\ref{th.1} involves two steps. The first one consists in showing the existence of a non-negative radially symmetric minimizer $v_*\in\mathcal{X}$, $v_*\not\equiv 0$, to an auxiliary constrained variational problem, combining the above mentioned CKN inequality with rearrangement arguments, see Section~\ref{subsec.exist}. Alongside, we show that $v_*$ is also a variational solution to~\eqref{eq1}. In a second step, we check that the minimizer $v_*$ thus obtained is also an extremal function for $\mathcal{S}$ on $\mathcal{X}\setminus\{0\}$, see Section~\ref{subsec.mp}.

\medskip

We next study in more details the properties of the extremal function $v_*$ constructed in Theorem~\ref{th.1}, which are actually also shared by any non-negative and radially symmetric variational solution $v\in \mathcal{X}$ to~\eqref{eq1} with non-increasing profile, $v\not\equiv 0$.

\begin{theorem}\label{th.2}
	Let $m$ and $\sigma$ be as in~\eqref{exp} and consider a non-negative and radially symmetric variational solution $v\in \mathcal{X}$ to~\eqref{eq1} with non-increasing profile, $v\not\equiv 0$. Then $v$ is compactly supported, bounded, and belongs to $W^{2,q}(\real^N)$ for any $q\in [1,N/|\sigma|)$. Moreover, introducing $r=|x|$ and $V(r) = V(|x|) = v(x)$ for $x\in\mathbb{R}^N$, there holds:
	\begin{itemize}
		\item If $\sigma\in (-1,0)$, then $V\in C^2((0,\infty))$ and, as $r\to 0$,
		\begin{equation}
			V''(r) = - \frac{\sigma+1}{\sigma+N} V(0) r^{\sigma} + \frac{V(0)^{1/m}}{N(m-1)} + o(1). \label{V0_1}
		\end{equation}
		\item If $\sigma = -1$, then $V\in C^2([0,\infty))$ and, as $r\to 0$,
		\begin{equation}
			V''(r) =  \frac{V(0)}{N(N-1)} + \frac{V(0)^{1/m}}{N(m-1)} + o(1). \label{V0_2}
		\end{equation}
		\item If $\sigma \in (-2,-1)$, then $V\in C^2((0,\infty))$ and, as $r\to 0$,
		\begin{equation}
			\begin{split}
			V''(r) & = - \frac{\sigma+1}{\sigma+N} V(0) r^{\sigma} + \frac{(2\sigma+3)}{(\sigma+2)(N+2\sigma+2)(\sigma+N)} V(0) r^{2(\sigma+1)} \\
			& \qquad + o\left( r^{2(\sigma+1)} \right).
			\end{split} \label{V0_3}
		\end{equation}
	\end{itemize}
\end{theorem}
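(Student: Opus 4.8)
The plan is to work throughout with the radial profile $V$, exploiting the fact that $v$ is already known to be non-negative, radially symmetric and non-increasing, and to proceed in four stages: boundedness, then an ODE analysis away from the origin giving compact support, then global Sobolev regularity, and finally the asymptotic expansions near $r=0$.

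First I would establish boundedness. Since $v\in\mathcal{X}=H^1(\mathbb{R}^N)\cap L^{(m+1)/m}(\mathbb{R}^N)$ is non-negative, radial and non-increasing, its profile $V$ is non-increasing on $(0,\infty)$, so the only place $v$ can blow up is at $r=0$. The monotonicity of $V$ together with $v\in L^{(m+1)/m}(\mathbb{R}^N)$ forces $V(r)^{(m+1)/m}\,r^N\to 0$ as $r\to 0$, hence $V(r)\le C r^{-Nm/(m+1)}$ near the origin; this is not yet a bound, but it controls the local integrability of the right-hand side. The cleaner route is a Moser/De Giorgi-type iteration (or a local comparison argument) on a small ball $B_\rho$: writing \eqref{eq1} as $-\Delta v = |x|^\sigma v - \tfrac{1}{m-1}v^{1/m}\le |x|^\sigma v$, and using that $|x|^\sigma\in L^{N/|\sigma|,\mathrm{loc}}$ with $N/|\sigma|>N/2$ because $|\sigma|<2$, one gets that $v\in L^\infty_{\mathrm{loc}}$ by a standard subsolution estimate; combined with the decay of $V$ at infinity coming from $v\in H^1$, this yields $v\in L^\infty(\mathbb{R}^N)$ with $\|v\|_\infty = V(0)$.

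Next, compact support and the ODE. Once $v$ is bounded, $|x|^\sigma v\in L^q_{\mathrm{loc}}$ for $q<N/|\sigma|$, and elliptic regularity ($W^{2,q}$ estimates for the Laplacian) gives $v\in W^{2,q}_{\mathrm{loc}}$ for all $q\in[1,N/|\sigma|)$; since this range of $q$ includes exponents above $N$ exactly when $|\sigma|<1$ and stays below $N$ when $|\sigma|\ge 1$, I would record $v\in W^{2,q}(\mathbb{R}^N)$ globally using in addition the good decay at infinity (the far-field behaviour is governed by $-\Delta v + \tfrac{1}{m-1}v^{1/m}=|x|^\sigma v$ with a decaying potential, and the sublinear absorption produces the compact support). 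For $r>0$ the coefficients are smooth, so $V$ solves the ODE
\begin{equation*}
	V''(r) + \frac{N-1}{r}V'(r) + r^\sigma V(r) - \frac{1}{m-1} V(r)^{1/m} = 0, \qquad r>0,
\end{equation*}
and $V\in C^2((0,\infty))$ by bootstrapping. To get compact support, let $R:=\sup\{r>0: V(r)>0\}$ and suppose $R=\infty$; integrating the ODE in the form $(r^{N-1}V')' = r^{N-1}\big(\tfrac{1}{m-1}V^{1/m} - r^\sigma V\big)$ against a suitable test function, or comparing $V$ with an explicit supersolution of the "dead-core" type (e.g. a multiple of $(R_0-r)_+^{2m/(m-1)}$, which is the natural self-similar dead-core profile for $\Delta w = \tfrac{1}{m-1}w^{1/m}$), forces $V$ to vanish in finite $r$; this is where the sublinearity $1/m<1$ is essential, since the absorption term dominates once $r$ is large and $r^\sigma V$ is small. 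I expect this comparison/dead-core step to be the main obstacle, because one must carefully match the decaying Hardy potential against the absorption and rule out oscillation or slow decay using the monotonicity of $V$ and an energy/integral identity rather than a naive pointwise comparison.

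Finally, the expansions near $r=0$. With $\|v\|_\infty=V(0)>0$ and $V$ continuous at $0$, I would treat the ODE as a perturbation of a linear problem. Set $W(r):=V(r)-V(0)$; then $r^{N-1}V'(r) = -\int_0^r s^{N-1}\big(s^\sigma V(s) - \tfrac{1}{m-1}V(s)^{1/m}\big)\,ds$, so
\begin{equation*}
	V'(r) = -\frac{V(0)}{\sigma+N} r^{\sigma+1} + \frac{V(0)^{1/m}}{N(m-1)} r + o\big(r^{\min\{\sigma+1,\,1\}}\big),
\end{equation*}
and differentiating (justified since $V\in C^2((0,\infty))$, and one checks $V'(r)\to 0$ and the integrands are continuous) gives $V''(r) = -\tfrac{(\sigma+1)V(0)}{\sigma+N} r^{\sigma} + \tfrac{V(0)^{1/m}}{N(m-1)} + o(1)$ when $\sigma>-1$, which is \eqref{V0_1}; at $\sigma=-1$ the $r^\sigma$ term becomes a constant $V(0)/(N-1)$, combining with the already-present constant and yielding \eqref{V0_2} together with $C^2$ regularity up to $r=0$. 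For $\sigma\in(-2,-1)$ the first correction $r^{\sigma+1}$ dominates $r$ in $V'$, so one must iterate once more: plug the leading behaviour $V(r)=V(0) - \tfrac{V(0)}{(\sigma+2)(\sigma+N)}r^{\sigma+2}+\cdots$ back into $s^\sigma V(s)$, obtaining a term of order $r^{2\sigma+2}$ in the integrand of $V'$, hence of order $r^{2(\sigma+1)}$ in $V''$; tracking the constant through the two integrations produces exactly the coefficient $\tfrac{2\sigma+3}{(\sigma+2)(N+2\sigma+2)(\sigma+N)}V(0)$ in \eqref{V0_3}. The bookkeeping here is routine once the integral representation of $V'$ is set up, the only care needed being that $2(\sigma+1)>\sigma$ so the new term is genuinely a lower-order correction to $V''$, and that the $o$-remainders propagate correctly through the Abel-type integrals.
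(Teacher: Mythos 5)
Your proposal is correct in outline and, for two of its three main components, follows essentially the paper's own route: compact support is obtained exactly as in the paper by first noting that outside a large ball $|x|^\sigma v\le \tfrac{1}{2(m-1)}v^{1/m}$ and then comparing with a dead-core supersolution of the form $(a-b|x|^2)_+^{2m/(m-1)}$ (your multiple of $(R_0-r)_+^{2m/(m-1)}$ is the same device, and the ``integral identity rather than naive pointwise comparison'' you anticipate is precisely the paper's test-function argument with $\varphi_n(v-V_{a,b})_+$); likewise the expansions near the origin are derived identically, from the integral representation of $r^{N-1}V'$ and one further iteration fed back through the second-order ODE. Where you genuinely diverge is the boundedness. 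The paper does not invoke De Giorgi--Nash--Moser theory for subsolutions with a potential in $L^q$, $q>N/2$: for $N\ge 3$ it first proves the weighted estimates $\mathcal{N}_\tau(v)<\infty$ for all $\tau\in(-N,0)$ by an iteration on integral identities with weights $(|x|^2+\epsilon)^{-k}$, then runs a hand-made truncation/Moser iteration giving $v\in L^p(\mathbb{R}^N)$ for all $p<\infty$ (with separate treatments of $N\in\{3,4\}$ and $N\ge 5$), and only then gets $|x|^\sigma v\in L^q$ for $q<N/|\sigma|$ from the pointwise bound $v(x)\le C|x|^{-N/p}\|v\|_p$, whence $v\in W^{2,q}\hookrightarrow L^\infty$. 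Your shortcut --- that $|x|^\sigma\in L^q_{\mathrm{loc}}$ for some $q>N/2$ precisely because $|\sigma|<2$, so the standard local boundedness theorem for non-negative $H^1$ subsolutions of $-\Delta v\le c(x)v$ applies --- is legitimate and considerably shorter, at the price of citing that theorem rather than staying self-contained; once $v\in L^\infty$, the Calder\'on--Zygmund step to $W^{2,q}$ on the (compact) support is immediate. Two small points to tighten: the integral representation of $r^{N-1}V'$ needs the justification that $r^{N-1}V'(r)\to 0$ as $r\to 0$ (the paper derives this from $\sigma+N>0$), and the expansion of $V''$ should be read off from the identity $V''=-\tfrac{N-1}{r}V'+r^{1-N}\big(r^{N-1}V'\big)'$ rather than by formally differentiating the expansion of $V'$; in particular your constant $V(0)/(N-1)$ in the case $\sigma=-1$ is the coefficient arising in $\big(r^{N-1}V'\big)'$, which becomes the $V(0)/\big(N(N-1)\big)$ of~\eqref{V0_2} only after the integration and that substitution.
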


A first consequence of Theorem~\ref{th.2} is that the extremal function $v_*$ of the CKN inequality~\eqref{minimizer} constructed in Theorem~\ref{th.1} is compactly supported, a feature which seems to have been unnoticed in the literature, as far as we know. The proof is provided in Section~\ref{subsec.compact} and applies to an arbitrary non-negative and radially symmetric variational solution $v\in \mathcal{X}$ to~\eqref{eq1} with non-increasing profile. It relies on the construction of a suitable supersolution and a comparison argument. We next study the Sobolev regularity and boundedness of $v$ in Section~\ref{subsec.br}. The cornerstone of the proof is the derivation of estimates on $v$ in $L^2(\mathbb{R}^N,|x|^\tau\,dx)$ for all $\tau\in (-N,0)$. As for the precise behavior of $v$ near $x=0$, we exploit the radial symmetry of $v$ and the corresponding ordinary differential equation. 

We further stress that the expansions~\eqref{V0_1} and~\eqref{V0_3} ensure that the stated regularity of $v$ is optimal for $\sigma\in (\max\{-N,-2\},0)$, except for $\sigma=-1$ (and $N\ge 2$). In fact, when $\sigma\in (\max\{-N,-2\},0)\setminus\{-1\}$, $v$ is not a classical solution to~\eqref{eq1}, but nevertheless belongs to $C^{1,\alpha}(\mathbb{R}^N)$ for any $\alpha\in (0,1+\sigma)$ when $\sigma \in (-1,0)$ and to $C^{\alpha}(\mathbb{R}^N)$ for any $\alpha\in (0,2+\sigma)$ when $\sigma \in (-2,-1)$. The case $\sigma=-1$ offers an interesting novelty and we infer from~\eqref{V0_2} that $v$ belongs at least to $C^2(\mathbb{R}^N)$ with 
\begin{equation*}
	D^2v(0) = \left( \frac{v(0)}{N(N-1)} + \frac{v(0)^{1/m}}{N(m-1)} \right) I_N,
\end{equation*}
where $I_N$ denotes the identity matrix of order $N$.

\medskip

As already mentioned, we supplement Theorem~\ref{th.1} with a non-existence result for~\eqref{eq1} when $\sigma\le -2$, thereby showing the optimality of the existence statement in Theorem~\ref{th.1} with respect to the parameter $\sigma$ when $N\ge 2$.

\begin{theorem}\label{th.3}
	Let $m>1$ and $\sigma\in (-\infty,-2]$. Then the equation~\eqref{eq1} has no non-trivial non-negative variational solution in $\mathcal{X}$.
\end{theorem}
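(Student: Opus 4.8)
The plan is to argue by contradiction: suppose $v\in\mathcal{X}$ is a non-trivial non-negative variational solution to~\eqref{eq1} when $\sigma\le -2$, and derive an incompatibility from testing the equation against suitable functions. The fundamental tension is that, since $v\in\mathcal{X}\subset H^1(\real^N)\cap L^{(m+1)/m}(\real^N)$ and $v\not\equiv 0$, the quantity $\|v\|_{L^2(\real^N,|x|^\sigma\,dx)}$ must be finite (by the CKN inequality of Lemma~\ref{lem.embed}), yet for $\sigma\le -2$ this forces $v$ to decay at the origin faster than a positive solution of the equation can, because near $x=0$ the leading balance in~\eqref{eq1} between $-\Delta v$ and $|x|^\sigma v$ cannot be sustained if $v(0)>0$.

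First I would record the variational (weak) formulation: for every $\phi\in C_c^\infty(\real^N)$,
\begin{equation*}
	\int_{\real^N} \nabla v\cdot\nabla\phi\,dx + \frac{1}{m-1}\int_{\real^N} v^{1/m}\phi\,dx = \int_{\real^N} |x|^\sigma v\,\phi\,dx,
\end{equation*}
and note that membership of $v$ in $L^2(\real^N,|x|^\sigma\,dx)$ makes the right-hand side well-defined and, by density and Cauchy--Schwarz, the identity extends to $\phi\in\mathcal{X}$ with $\phi|x|^{\sigma/2}\in L^2$. Then I would test with $\phi=v$ itself, or more safely with $\phi=\zeta^2 v$ where $\zeta$ is a cutoff, to obtain
\begin{equation*}
	\int_{\real^N} |\nabla v|^2\,dx + \frac{1}{m-1}\int_{\real^N} v^{(m+1)/m}\,dx = \int_{\real^N} |x|^\sigma v^2\,dx.
\end{equation*}
The non-existence will come from showing the right-hand side is forced to be $+\infty$, or from a more refined Pohozaev/Hardy-type obstruction.

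The cleanest route, I expect, is a Hardy-inequality argument. When $\sigma=-2$, Hardy's inequality gives $\int |x|^{-2}v^2\,dx \le \frac{4}{(N-2)^2}\int|\nabla v|^2\,dx$ for $N\ge 3$ — but this goes the wrong way for a contradiction directly, so instead I would exploit the \emph{strict} form or work locally near the origin. The key structural fact to extract is that any non-negative variational solution is (by the arguments underlying Theorem~\ref{th.2}, or by De Giorgi--Nash--Moser applied away from the origin) continuous and, if $v(0)>0$, satisfies $|x|^\sigma v \gtrsim |x|^\sigma$ near $0$, which is not in $L^1_{loc}$ when $\sigma\le -N$, contradicting that $-\Delta v + (m-1)^{-1}v^{1/m}$ is a locally integrable distribution; whereas if $v(0)=0$ one shows $v\equiv 0$ by a unique-continuation / strong maximum principle argument. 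For the genuinely critical borderline $\sigma=-2$ with $N\ge 2$ (where $-2\ge -N$ fails only if $N=1$, excluded since $N\ge 2$ is the relevant range), I would instead test the equation against $\phi = |x|^{-a}\chi(x)$ for a carefully chosen exponent $a$ and cutoff $\chi$ supported near $0$, integrate by parts twice, and show that the resulting identity forces $v$ to vanish, using that $-\Delta(|x|^{-a}) = a(N-2-a)|x|^{-a-2}$ and choosing $a$ so that the singular coefficient $|x|^\sigma = |x|^{-2}$ is reproduced with the wrong sign unless $v\equiv 0$ near the origin; then propagate $v\equiv 0$ to all of $\real^N$ by the strong maximum principle applied to the locally uniformly elliptic equation $-\Delta v + c(x)v = 0$ with $c(x) = (m-1)^{-1}v^{1/m-1} - |x|^\sigma$ bounded on compact subsets of $\real^N\setminus\{0\}$.

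The main obstacle I anticipate is the endpoint case $\sigma=-2$ itself: there the Hardy constant is sharp and $\int|x|^{-2}v^2\,dx$ need not be infinite merely from $v\in H^1$, so the contradiction cannot be purely a matter of integrability and must use the equation's structure — most likely the fact that the sublinear absorption term $\frac{1}{m-1}v^{1/m}$, being \emph{superlinear in the wrong direction} near the support boundary but dominated near the origin, combined with the borderline Hardy potential, leaves no room for a positive solution. Handling this will probably require an ODE analysis of the radial profile (reducing to the case of radially symmetric $v$ is legitimate since the statement asserts non-existence of \emph{any} non-trivial solution, but one may first need a symmetry or a direct ODE-comparison argument near $r=0$), showing that any local solution near $r=0$ with $V(0)\ge 0$ either blows up or is identically zero, incompatible with $v\in\mathcal{X}$.
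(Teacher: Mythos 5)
Your proposal does not close the argument, and two of its key steps would fail. First, the ``propagate $v\equiv 0$ by unique continuation / strong maximum principle'' step is unavailable for this equation: the sublinear absorption term $\frac{1}{m-1}v^{1/m}$ destroys unique continuation, and indeed Theorem~\ref{th.2} of this very paper shows that non-negative solutions are \emph{compactly supported}, so a non-negative solution can perfectly well vanish on an open set without vanishing identically; moreover your zero-order coefficient $c(x)=(m-1)^{-1}v^{1/m-1}-|x|^\sigma$ is unbounded wherever $v$ vanishes, so it is not ``bounded on compact subsets of $\real^N\setminus\{0\}$''. Second, the reduction to a radial ODE near $r=0$ is not legitimate for a non-existence statement: to rule out \emph{all} solutions you cannot restrict to radial ones unless you first prove that the existence of some solution implies the existence of a radial one, which you do not. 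Beyond these, the local-integrability obstruction you invoke only bites for $\sigma\le -N$ (and presupposes continuity of $v$ at the origin, which the paper's regularity theory establishes only for $\sigma>\max\{-2,-N\}$), so for $N\ge 3$ the whole range $\sigma\in(-N,-2]$ is left open; and you yourself concede that the borderline case $\sigma=-2$ is unresolved and defer it to an unspecified ODE analysis. So the proposal is a programme with its hardest branches missing, not a proof.

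The missing idea is the Pohozaev identity, which settles all cases $\sigma\le -2$ at once and requires no study of the behaviour near the origin. Testing~\eqref{eq1} against $x\cdot\nabla v$ gives
\begin{equation*}
	-\frac{N-2}{2}\|\nabla v\|_{2}^2-\frac{mN}{(m+1)(m-1)}\|v\|_{(m+1)/m}^{(m+1)/m}=-\frac{N+\sigma}{2}\,\mathcal{N}_{\sigma}^2(v),
\end{equation*}
while testing against $v$ gives $\|\nabla v\|_{2}^2+\frac{1}{m-1}\|v\|_{(m+1)/m}^{(m+1)/m}=\mathcal{N}_{\sigma}^2(v)$ (this is your own energy identity). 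Multiplying the second identity by $(N+\sigma)/2$ and adding eliminates the weighted norm and yields
\begin{equation*}
	\frac{\sigma+2}{2}\|\nabla v\|_2^{2}+\frac{\sigma(m+1)-N(m-1)}{2(m^2-1)}\|v\|_{(m+1)/m}^{(m+1)/m}=0 .
\end{equation*}
For $\sigma\le -2$ the first coefficient is non-positive and the second is strictly negative (since $\sigma<0$ and $m>1$), whence $v\equiv 0$. This is exactly the route the paper takes, following \cite{BL83, FT00}; the only technical point to check is the justification of the multiplier $x\cdot\nabla v$ for variational solutions, which is standard. I would encourage you to rework the proof along these lines.
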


The proof of Theorem~\ref{th.3} relies on the classical Pohozaev identity \cite{BL83, FT00} and is performed in Section~\ref{sec.ne}. It is unclear whether there exist non-trivial non-negative variational solutions $v\in\mathcal{X}$ to~\eqref{eq1} in space dimension $N=1$ in the remaining range $\sigma\in (-2,-1]$. It however follows from the continuous embedding of $H^1(\mathbb{R})$ in $L^\infty(\mathbb{R})$ that, if such a solution exists, then it has to vanish at $x=0$. In particular, it cannot be non-increasing as the solution provided by Theorem~\ref{th.1}.

\medskip

Besides its interest as an elliptic Hardy-H\'enon equation, another motivation for considering Eq.~\eqref{eq1} comes from the study of nonlinear diffusion equations with a source term which enjoys the same homogeneity. In particular, we are interested in describing the dynamics of the parabolic equation
\begin{equation}\label{eq2}
\partial_t u(t,x)=\Delta u^m(t,x)+|x|^{\sigma}u^m(t,x), \quad (t,x)\in(0,\infty)\times\real^N,
\end{equation}
with $N\geq1$ and $(m,\sigma)$ as in~\eqref{exp}. An essential first step in this direction is investigating the existence and properties of self-similar solutions, which are in separate variables form for~\eqref{eq2} and feature a finite time blow-up. More specifically,
\begin{equation}\label{SSS}
	u(t,x)=(T-t)^{-1/(m-1)}f(x), \quad (t,x)\in(0,T)\times\real^N, \ T\in(0,\infty).
\end{equation}
Inserting the ansatz~\eqref{SSS} into~\eqref{eq2} and letting $v=f^m$, we find after some straightforward calculations that $v$ solves the equation~\eqref{eq1}. We may then draw the following consequences of the previous analysis which apply to~\eqref{eq2}.

\begin{corollary}\label{cor.1}
Given $m$, $\sigma$ as in~\eqref{exp}, there exists at least one non-negative radially symmetric and compactly supported separate variables solution $u$ to~\eqref{eq2} in the form~\eqref{SSS} with a non-increasing profile $f$.
\end{corollary}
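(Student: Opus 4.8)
The plan is to read off the corollary directly from Theorem~\ref{th.1} and Theorem~\ref{th.2} by undoing the change of unknowns that led from~\eqref{eq2} to~\eqref{eq1}. Concretely, I would start from the function $v_*$ provided by Theorem~\ref{th.1}: it is a non-negative, radially symmetric variational solution of~\eqref{eq1} with a non-increasing profile, $v_*\not\equiv0$, lying in $\mathcal{X}$. Applying Theorem~\ref{th.2} to $v_*$, I obtain in addition that $v_*$ is bounded, compactly supported, and belongs to $W^{2,q}(\real^N)$ for every $q\in[1,N/|\sigma|)$. Now set $f:=v_*^{1/m}$. Since $v_*$ is non-negative, bounded, radially symmetric with non-increasing profile and compactly supported, so is $f$; moreover $f$ inherits enough regularity from $v_*$ (via the boundedness of $v_*$ away from where it degenerates, and via the compact support) for the computation below to make sense in the appropriate weak sense.

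The next step is to verify that $u(t,x):=(T-t)^{-1/(m-1)}f(x)$ is indeed a (weak) solution of~\eqref{eq2} on $(0,T)\times\real^N$ for any $T\in(0,\infty)$. Inserting the ansatz~\eqref{SSS} into~\eqref{eq2}, one computes $\partial_t u = \frac{1}{m-1}(T-t)^{-1/(m-1)-1}f$ and $u^m = (T-t)^{-m/(m-1)}f^m$, so that
\begin{equation*}
	\Delta u^m + |x|^\sigma u^m = (T-t)^{-m/(m-1)}\bigl(\Delta f^m + |x|^\sigma f^m\bigr),
\end{equation*}
and, since $-1/(m-1)-1 = -m/(m-1)$, the equation~\eqref{eq2} reduces to
\begin{equation*}
	\frac{1}{m-1} f = \Delta f^m + |x|^\sigma f^m = \Delta v_* + |x|^\sigma v_*.
\end{equation*}
Recalling $f = v_*^{1/m}$, this is exactly the statement that $v_*$ solves $-\Delta v_* + \frac{1}{m-1} v_*^{1/m} = |x|^\sigma v_*$, i.e.\ equation~\eqref{eq1}, which holds by Theorem~\ref{th.1}. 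Hence $u$ is a separate-variables solution of~\eqref{eq2} of the claimed form, and it has the asserted qualitative features (non-negativity, radial symmetry, compact support of $f$, non-increasing profile) because $f$ does.

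I expect the only genuinely delicate point to be the bookkeeping about in which functional sense $u$ solves~\eqref{eq2}: Theorem~\ref{th.1} produces $v_*$ only as a \emph{variational} (hence weak) solution of~\eqref{eq1}, not a classical one — and indeed Theorem~\ref{th.2} shows it is generally not $C^2$ near the origin when $\sigma\neq-1$ — so one must make precise that $u$ solves~\eqref{eq2} in the weak/distributional sense associated with the porous medium operator, testing against $\varphi\in C_c^\infty((0,T)\times\real^N)$ and transferring the spatial derivatives onto $\varphi$, using the $W^{2,q}$ regularity and compact support of $v_* = f^m$ to justify the integrations by parts. The separation of variables makes the time-dependence trivial, so this is really just a matter of quoting the weak formulation of~\eqref{eq1} from Theorem~\ref{th.1} and multiplying by the scalar factor $(T-t)^{-m/(m-1)}$; no new estimates are needed. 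Everything else — the sign, the symmetry, the monotonicity of the profile, and the compactness of its support — is immediate from $f = v_*^{1/m}$ together with Theorems~\ref{th.1} and~\ref{th.2}.
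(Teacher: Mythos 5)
Your proposal is correct and follows exactly the route the paper takes: the substitution $v=f^m$ in the ansatz~\eqref{SSS} reduces~\eqref{eq2} to~\eqref{eq1}, and the corollary is then read off from the solution $v_*$ of Theorem~\ref{th.1} together with the compact support provided by Theorem~\ref{th.2}, setting $f=v_*^{1/m}$. Your additional remarks on the weak formulation are a sensible elaboration of what the paper leaves implicit.
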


When $\sigma=0$, self-similar solutions to~\eqref{eq2} in separate variables form are constructed by a similar approach with the help of an auxiliary elliptic equation and their role in the dynamics of~\eqref{eq2} is elucidated in \cite{CdPE98, CdPE02, CEF96}, see also the references therein. The case $\sigma> 0$ in~\eqref{eq2} is considered recently by the authors and collaborators in \cite{IL22, IS20, IS22} where existence, non-existence, and classification of separate variables solutions to~\eqref{eq2} is obtained. In view of this discussion and precedents, it is strongly expected that the solutions to~\eqref{eq2} provided by Corollary~\ref{cor.1} play a significant role in the dynamics of~\eqref{eq2}.

\medskip

From now on, the parameters $m$ and $\sigma$ are assumed to satisfy~\eqref{exp} and we denote positive constants depending only on $N$, $m$, and $\sigma$ by $C$. Dependence upon additional parameters will be indicated explicitly. We also denote the measure of the unit ball $B(0,1)$ of $\mathbb{R}^N$ by $\omega_N$. 

\section{Existence and minimizing property}\label{sec.min}

The proof of Theorem~\ref{th.1} is based on a variational technique with constraints. We introduce the following functional
\begin{equation}\label{func}
J(v):=\frac{1}{2}\|\nabla v\|_{2}^2+\frac{m}{m^2-1}\|v\|_{(m+1)/m}^{(m+1)/m}, \quad v\in \mathcal{X},
\end{equation}
and the following weighted $L^2$-norm for $\tau\in\mathbb{R}$,
\begin{equation}\label{moment}
\mathcal{N}_{\tau}^2(v):=\int_{\real^N}|x|^{\tau}v^2(x)\,dx, \quad  v\in L^2(\real^N,|x|^{\tau}\,dx).
\end{equation}
The general idea of the proof is to minimize the functional $J$ in~\eqref{func} on the set
\begin{equation*}
\mathcal{A}:=\{v\in \mathcal{X}\ :\ \mathcal{N}_{\sigma}^2(v)=1\}.
\end{equation*}

\subsection{Existence and radial symmetry}\label{subsec.exist}

We first recall that, as a consequence of a CKN inequality \cite[Theorem]{CKN84}, the space $\mathcal{X}$ is continuously embedded in $L^2(\real^N,|x|^{\tau}\,dx)$ for any $\tau\in[-2,0)$.

\begin{lemma}\label{lem.embed}
Let $\tau\in[-2,0)$, $\tau>-N$. The space $\mathcal{X} = H^1(\real^N)\cap L^{(m+1)/m}(\real^N)$ embeds continuously in $L^2(\real^N,|x|^{\tau}\,dx)$ and there is $M(\tau)>0$ depending only on $N$, $m$, and $\tau$ such that
\begin{equation}\label{CKN}
	\mathcal{N}_{\tau}^2(w)\leq M(\tau) \|\nabla w\|_{2}^{(\tau+2)\theta-\tau}\|w\|_{(m+1)/m}^{(\tau+2)(1-\theta)}, \quad w\in \mathcal{X},
\end{equation}
the parameter $\theta$ being defined in~\eqref{theta}.
\end{lemma}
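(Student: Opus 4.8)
The plan is to deduce the inequality~\eqref{CKN} from the general Caffarelli--Kohn--Nirenberg interpolation inequality \cite[Theorem]{CKN84}, checking that the admissible range of parameters therein contains the exponents appearing here. Recall that the CKN inequality provides, for suitable exponents $p,q,r$ and weights $a,b,c$ satisfying a scaling (dimensional balance) relation and a set of sign/ordering constraints, a bound of the form
\begin{equation*}
	\big\| |x|^{c} w \big\|_{r} \le C\, \big\| |x|^{a} \nabla w \big\|_{p}^{\beta}\, \big\| |x|^{b} w \big\|_{q}^{1-\beta}.
\end{equation*}
First I would specialize this to the unweighted gradient norm ($a=0$, $p=2$) and to $q=(m+1)/m$, $b=0$, and read off $r=2$, $c=\tau/2$ (so that the left-hand side becomes $\mathcal{N}_\tau(w)$, after squaring). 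The scaling relation of CKN then forces the exponent $\beta$ to be exactly the one yielding the powers $(\tau+2)\theta-\tau$ on $\|\nabla w\|_2$ and $(\tau+2)(1-\theta)$ on $\|w\|_{(m+1)/m}$ after squaring; I would verify this by a direct dilation computation $w_\lambda(x)=w(\lambda x)$, matching homogeneities on both sides, which pins down $\theta$ as in~\eqref{theta} and checks consistency of the formula. Squaring the resulting inequality gives~\eqref{CKN} with $M(\tau)=C^2$.

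The second task is to verify that the parameter constraints in \cite[Theorem]{CKN84} are met on the whole range $\tau\in[-2,0)$ with $\tau>-N$. The relevant conditions amount to: the interpolation weight on the left, $c=\tau/2$, lies in the correct interval relative to $0$ (the other two weights) and to the dimension $N$; the interpolation exponent $\beta$ lies in $[0,1]$; and the "gap" condition distinguishing the borderline case is satisfied (here we are in the non-borderline, genuinely interpolating regime since $q=(m+1)/m\ne 2=r$). The constraint $\tau>-N$ is precisely what guarantees local integrability of $|x|^\tau$ near the origin, and $\tau\ge -2$ is what keeps $c=\tau/2\ge -1$, i.e. within one unit of the dimension-shifted admissible window governed by the gradient term; the upper bound $\tau<0$ ensures $\beta<1$, so the gradient genuinely enters. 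Once these are checked, continuity of the embedding $\mathcal{X}\hookrightarrow L^2(\real^N,|x|^\tau\,dx)$ follows immediately from~\eqref{CKN} together with Young's inequality, since the right-hand side is bounded by $C(\|\nabla w\|_2 + \|w\|_{(m+1)/m})$ raised to the total exponent $(\tau+2)\theta-\tau+(\tau+2)(1-\theta)=2$, which is controlled by $\|w\|_{\mathcal{X}}^2$.

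The main obstacle I anticipate is purely bookkeeping: translating the notation and the somewhat intricate list of hypotheses of \cite{CKN84} into the present normalization, and making sure the exponent $\theta$ in~\eqref{theta} is exactly the one the scaling relation produces (it is easy to make an algebra slip here). There is no serious analytic difficulty — the inequality is a black box — so the write-up should consist of (i) the dilation argument fixing $\theta$ and the exponents, and (ii) a line-by-line check of admissibility of the weights and exponents, with explicit use of $\tau\in[-2,0)\cap(-N,\infty)$, followed by the one-line Young's inequality argument for the embedding.
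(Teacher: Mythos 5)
Your plan is correct and matches the paper's proof: the paper likewise applies \cite[Theorem]{CKN84} as a black box with $p=r=2$, $q=(m+1)/m$, zero weights on the gradient and on $w$, weight $\tau/2$ on the left-hand side, reads off the interpolation exponent $a=\big((\tau+2)\theta-\tau\big)/2$ from the dimensional balance relation, and then extends from $C_0^\infty(\real^N)$ to $\mathcal{X}$ by density. One small bookkeeping correction: it is $\tau>-2$ (not $\tau<0$) that forces $a<1$, while $\tau<0$ gives $a>0$; at the endpoint $\tau=-2$ one has $a=1$ and the inequality degenerates to Hardy's inequality, which is still consistent with~\eqref{CKN} since the exponent $(\tau+2)(1-\theta)$ on $\|w\|_{(m+1)/m}$ then vanishes.
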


\begin{proof}
For $w\in C_{0}^{\infty}(\real^N)$, the functional inequality~\eqref{CKN} is a consequence of \cite[Theorem]{CKN84} by letting, in the notation therein, $p=2$, $r=2$, $q=(m+1)/m$, $\alpha=\beta=0$, $\gamma=\tau/2$, and $\sigma=\tau/[(\tau+2)\theta-\tau]<0$, with the exponent $a$ therein given by
$$
	\frac{1}{2}+\frac{\tau}{2N}=a\left(\frac{1}{2}-\frac{1}{N}\right)+(1-a)\frac{m}{m+1},
$$
whence
$$
	a=\frac{N(m-1)-\tau(m+1)}{N(m-1)+2(m+1)}=\frac{(\tau+2)\theta-\tau}{2},
$$
as claimed. Then, the density of $C_{0}^{\infty}(\real^N)$ in $\mathcal{X}$ extends the result to general functions $w\in\mathcal{X}$.
\end{proof}

We next show that we can restrict the analysis of $J$ to non-negative radially symmetric functions with non-increasing profiles.

\begin{lemma}\label{lem.radial}
Let $v\in\mathcal{A}$. There exists $w\in\mathcal{A}_{SD}^{+}$ such that $J(v)\geq J(w)$, where $\mathcal{A}_{SD}^{+}$ is the subset of $\mathcal{A}$ consisting only of non-negative radially symmetric functions with non-increasing profiles. In particular,
\begin{equation*}
	\inf_{v\in \mathcal{A}} J(v) = \inf_{w\in\mathcal{A}_{SD}^{+}} J(w).
\end{equation*}
\end{lemma}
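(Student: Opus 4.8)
\textbf{Proof plan for Lemma~\ref{lem.radial}.}

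The plan is to perform a Schwarz symmetrization (symmetric decreasing rearrangement) of a given function $v\in\mathcal{A}$, and to check that this operation decreases (or preserves) every term in $J$ while preserving the constraint $\mathcal{N}_\sigma^2=1$. First I would reduce to the case $v\ge 0$: since $J(v)=J(|v|)$ (the gradient term is unchanged by the chain rule $|\nabla|v||=|\nabla v|$ a.e., the $L^{(m+1)/m}$ term depends only on $|v|$) and $\mathcal{N}_\sigma^2(v)=\mathcal{N}_\sigma^2(|v|)$, replacing $v$ by $|v|$ keeps us in $\mathcal{A}$ without increasing $J$. So assume $v\in\mathcal{A}$, $v\ge 0$. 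Then let $w:=v^*$ be its symmetric decreasing rearrangement; by construction $w$ is non-negative, radially symmetric with non-increasing profile, so it lies in $\mathcal{A}_{SD}^+$ provided we verify $w\in\mathcal{X}$ and $\mathcal{N}_\sigma^2(w)=1$.

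The three ingredients I would invoke are all classical. (i) Equimeasurability: $v$ and $v^*$ have the same distribution function, hence $\|v^*\|_{(m+1)/m}=\|v\|_{(m+1)/m}$ and, more generally, $\int \Phi(v^*)=\int\Phi(v)$ for any Borel $\Phi\ge 0$ with $\Phi(0)=0$; this handles the absorption term in $J$. (ii) The P\'olya–Szeg\H{o} inequality: $\|\nabla v^*\|_2\le\|\nabla v\|_2$ for $v\in H^1(\mathbb{R}^N)$, $v\ge 0$; this, together with (i), gives $v^*\in\mathcal{X}$ and $J(v^*)\le J(v)$. (iii) A Hardy–Littlewood-type rearrangement inequality applied to the weight $|x|^\sigma$: since $\sigma<0$, the function $x\mapsto|x|^\sigma$ is itself radially symmetric and non-increasing, i.e.\ it equals its own symmetric decreasing rearrangement, so
\begin{equation*}
	\mathcal{N}_\sigma^2(v)=\int_{\mathbb{R}^N}|x|^\sigma v(x)^2\,dx \le \int_{\mathbb{R}^N}|x|^\sigma \big(v^*(x)\big)^2\,dx=\mathcal{N}_\sigma^2(v^*),
\end{equation*}
using that $(v^2)^*=(v^*)^2$ by equimeasurability. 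This is the one point requiring a little care: the rearrangement inequality increases the weighted norm, so after symmetrization we land on $\mathcal{N}_\sigma^2(v^*)\ge 1$ rather than exactly $1$.

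To restore the constraint I would rescale: set $w:=\mu v^*$ with $\mu:=\mathcal{N}_\sigma^2(v^*)^{-1/2}\in(0,1]$, so that $w\in\mathcal{A}_{SD}^+$. Since both terms of $J$ are homogeneous of degree $\ge 1$ in $v$ (the Dirichlet term has degree $2$, the absorption term has degree $(m+1)/m>1$) and $J(v)\ge 0$, multiplying by $\mu\le 1$ does not increase $J$; combined with $J(v^*)\le J(v)$ we get $J(w)\le J(v^*)\le J(v)$, which is the claim. The displayed identity of infima is then immediate: $\mathcal{A}_{SD}^+\subset\mathcal{A}$ gives $\inf_{\mathcal{A}}J\le\inf_{\mathcal{A}_{SD}^+}J$, while the construction shows that for every $v\in\mathcal{A}$ there is $w\in\mathcal{A}_{SD}^+$ with $J(w)\le J(v)$, giving the reverse inequality. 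The only genuine obstacle is the bookkeeping around the weighted norm: one must notice that the ``wrong-direction'' rearrangement inequality (because $|x|^\sigma$ is a decreasing weight, symmetrization \emph{increases} $\mathcal{N}_\sigma^2$) is harmless precisely because the homogeneity of $J$ lets us scale back down without cost; everything else is a routine application of standard rearrangement theory (see, e.g., the references already present in the paper, or Lieb–Loss).
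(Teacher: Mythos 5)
Your proposal is correct and follows essentially the same route as the paper: replace $v$ by $|v|$, take the symmetric decreasing rearrangement, use equimeasurability for the $L^{(m+1)/m}$ term, P\'olya--Szeg\H{o} for the Dirichlet term, and the Hardy--Littlewood inequality (with $(v^2)^*=(|v|^*)^2$) to get $\mathcal{N}_\sigma^2(|v|^*)\ge 1$, then rescale by $\mathcal{N}_\sigma(|v|^*)^{-1}\le 1$ and use the positive homogeneity of both terms of $J$ to conclude. The one point you flag as delicate --- that symmetrization pushes the weighted norm the ``wrong way'' but the normalization costs nothing --- is exactly the observation the paper's proof rests on.
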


\begin{proof}
Consider $v\in\mathcal{A}$ and let $|v|^*$ be the symmetric-decreasing rearrangement of $|v|$. On the one hand, Polya's inequality \cite[7.17~Lemma]{LL01} ensures that
\begin{equation}\label{interm1}
	\|\nabla v\|_2=\|\nabla|v|\|_2\geq\|\nabla|v|^*\|_2,
\end{equation}
while the equimeasurability property of the rearrangement gives
\begin{equation}\label{interm2}
	\|v\|_{(m+1)/m}=\||v|^*\|_{(m+1)/m}.
\end{equation}
On the other hand, we infer from the pointwise identity $(v^2)^*=(|v|^*)^2$ \cite[Eq.~(3.8)]{Li77}, the monotonicity of $x\mapsto |x|^{\sigma}$ and the Hardy-Litlewood inequality \cite[3.4~Theorem]{LL01} that
\begin{align*}
	\mathcal{N}_{\sigma}^2(v) & = \int_{\real^N} |x|^{\sigma} v^2(x)\,dx \leq \int_{\real^N} |x|^{\sigma} (v^2)^*(x)\,dx\\
	& = \int_{\real^N} |x|^{\sigma} (|v|^*)^2(x)\,dx = \mathcal{N}_{\sigma}^2(|v|^*),
\end{align*}
from which we deduce that, since $v\in\mathcal{A}$,
\begin{equation}\label{interm3}
	\mathcal{N}_{\sigma}^2(|v|^*)\geq1.
\end{equation}
Introducing $w:=|v|^*/\mathcal{N}_{\sigma}(|v|^*)$, it is then obvious that $w\in\mathcal{A}_{SD}^{+}$ and we obtain from~\eqref{interm1}, \eqref{interm2} and~\eqref{interm3} that
\begin{align*}
	J(w) & = \frac{1}{2} \frac{\|\nabla |v|^*\|_2^{2}}{\mathcal{N}_{\sigma}^2(|v|^*)} + \frac{m}{m^2-1} \frac{\||v|^*\|_{(m+1)/m}^{(m+1)/m}}{\mathcal{N}_{\sigma}^{(m+1)/m}(|v|^{*})}\\
	&\leq \frac{1}{2} \|\nabla |v|^*\|_2^{2} + \frac{m}{m^2-1}\||v|^*\|_{(m+1)/m}^{(m+1)/m}\\
	&\leq \frac{1}{2} \|\nabla v\|_2^{2} + \frac{m}{m^2-1}\|v\|_{(m+1)/m}^{(m+1)/m}=J(v),
\end{align*}
as claimed.
\end{proof}

We are now in a position to prove the existence of a minimizer of $J$ on $\mathcal{A}$.

\begin{proposition}\label{prop.existSD}
There is $v_*\in \mathcal{A}_{SD}^{+}$ such that
\begin{equation*}
	J(v_*) = \inf_{w\in\mathcal{A}} J(v).
\end{equation*}
In addition, $v_*$ is a variational solution to~\eqref{eq1}.
\end{proposition}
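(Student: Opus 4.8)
The strategy is the direct method of the calculus of variations, using Lemma~\ref{lem.radial} to reduce the problem to the well-behaved class $\mathcal{A}_{SD}^{+}$ and Lemma~\ref{lem.embed} to control the constraint. First I would set $\mu := \inf_{\mathcal{A}} J = \inf_{\mathcal{A}_{SD}^{+}} J$ and observe that $\mu>0$: indeed, for $v\in\mathcal{A}$ the CKN inequality~\eqref{CKN} with $\tau=\sigma$ gives $1 = \mathcal{N}_\sigma^2(v)\le M(\sigma)\|\nabla v\|_2^{(\sigma+2)\theta-\sigma}\|v\|_{(m+1)/m}^{(\sigma+2)(1-\theta)}$, so $\|\nabla v\|_2$ and $\|v\|_{(m+1)/m}$ cannot both be small, whence $J(v)$ is bounded below by a positive constant. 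Then I would take a minimizing sequence; by Lemma~\ref{lem.radial} it may be chosen in $\mathcal{A}_{SD}^{+}$, i.e. a sequence $(v_j)_j$ of non-negative, radially symmetric, non-increasing functions with $\mathcal{N}_\sigma^2(v_j)=1$ and $J(v_j)\to\mu$. Since $J(v_j)$ is bounded, $(v_j)_j$ is bounded in $H^1(\real^N)$ and in $L^{(m+1)/m}(\real^N)$, hence bounded in $\mathcal{X}$.

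Next I would extract a subsequence converging weakly in $\mathcal{X}$ (i.e. weakly in $H^1$ and weakly in $L^{(m+1)/m}$) to some $v_*\in\mathcal{X}$; passing to a further subsequence we may assume $v_j\to v_*$ a.e. in $\real^N$, and $v_*$ inherits non-negativity, radial symmetry, and the non-increasing profile from the $v_j$'s. Weak lower semicontinuity of the (convex, continuous) functional $J$ then gives $J(v_*)\le \liminf_j J(v_j)=\mu$. The remaining — and genuinely delicate — point is to show that $v_*\in\mathcal{A}$, that is $\mathcal{N}_\sigma^2(v_*)=1$; this is where the lack of compactness of the embedding $\mathcal{X}\hookrightarrow L^2(\real^N,|x|^\sigma\,dx)$ on all of $\real^N$ must be handled. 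I would exploit the radial monotone structure: for a non-increasing radial function $w\in\mathcal{X}$ one has the pointwise bound $w(x)\le C\|w\|_{(m+1)/m}|x|^{-Nm/(m+1)}$ for $|x|$ large and a similar $H^1$-type bound $w(x) \le C \|\nabla w\|_2 |x|^{-(N-2)/2}$ (for $N\ge 3$; the cases $N=1,2$ are even more favourable or handled via the $L^{(m+1)/m}$ bound) for $|x|$ small and large. These decay estimates show that the tails $\int_{|x|>R}|x|^\sigma v_j^2\,dx$ and the cores $\int_{|x|<\rho}|x|^\sigma v_j^2\,dx$ are small uniformly in $j$, since $\sigma+2 - Nm/(m+1)\cdot 2$ and $\sigma + (N-2) $ have the right signs in the integrable range — more precisely, one checks that $|x|^\sigma \big(|x|^{-2Nm/(m+1)}\big)$ is integrable at infinity and $|x|^\sigma$ times the squared $H^1$-pointwise bound is integrable near the origin, using $\sigma>-N$ (and $\sigma>-2$). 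Hence the mass of $|x|^\sigma v_j^2$ concentrates on a fixed annulus $\{\rho\le|x|\le R\}$, on which the compact Sobolev embedding $H^1\hookrightarrow L^2$ applies, giving $\mathcal{N}_\sigma^2(v_j)\to\mathcal{N}_\sigma^2(v_*)=1$. Therefore $v_*\in\mathcal{A}_{SD}^{+}$ and, combined with $J(v_*)\le\mu$, we get $J(v_*)=\mu$.

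Finally, to see that $v_*$ is a variational solution to~\eqref{eq1}, I would write the Euler–Lagrange equation for the constrained minimization of $J$ on $\mathcal{A}=\{\mathcal{N}_\sigma^2=1\}$: there is a Lagrange multiplier $\lambda\in\real$ such that, for all $\varphi\in\mathcal{X}$,
\begin{equation*}
	\int_{\real^N}\nabla v_*\cdot\nabla\varphi\,dx + \frac{1}{m-1}\int_{\real^N}v_*^{1/m}\varphi\,dx = \lambda\int_{\real^N}|x|^\sigma v_* \varphi\,dx.
\end{equation*}
Testing with $\varphi=v_*$ gives $2J(v_*)=\lambda\,\mathcal{N}_\sigma^2(v_*)=\lambda$, so $\lambda=2\mu>0$. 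A scaling $v_*\mapsto \kappa v_*$ with $\kappa=\lambda^{m/(m-1)}$ (chosen so that $\kappa$ times the gradient and linear-in-$\kappa$ terms match $\kappa^{1/m}$ times the absorption term after dividing by $\lambda$) then produces a function still in $\mathcal{A}_{SD}^{+}$ up to renormalization which solves~\eqref{eq1} with $\lambda=1$; I would carry out this homogeneity bookkeeping carefully, since the three terms in~\eqref{eq1} scale as $\kappa$, $\kappa^{1/m}$ and $\kappa$ respectively, so a single scaling suffices to absorb the multiplier. The main obstacle is unquestionably the recovery of the constraint $\mathcal{N}_\sigma^2(v_*)=1$ in the limit, for which the radial-monotone decay estimates delivering tightness of $|x|^\sigma v_j^2$ are the crucial ingredient; everything else is standard lower semicontinuity and Lagrange-multiplier bookkeeping.
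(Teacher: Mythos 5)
Your overall strategy coincides with the paper's: reduce to $\mathcal{A}_{SD}^{+}$ via Lemma~\ref{lem.radial}, apply the direct method, recover the constraint $\mathcal{N}_\sigma^2(v_*)=1$ in the limit, and then normalize the Lagrange multiplier by scaling. Your treatment of the constraint recovery differs in technique but is sound: you use the pointwise decay of radial non-increasing functions (in terms of $\|\cdot\|_{(m+1)/m}$ at infinity and $\|\nabla\cdot\|_2$ near the origin) to get uniform smallness of core and tail, plus Rellich on a fixed annulus; the paper instead invokes the strong $L^2(\mathbb{R}^N)$ convergence from the radial compactness result of Berestycki--Lions and controls the singularity at the origin by comparing $|x|^\sigma$ with $\epsilon^{\sigma-\tau}|x|^\tau$ for some $\tau\in(\max\{-N,-2\},\sigma)$ and using the CKN bound of Lemma~\ref{lem.embed}. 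Both routes work and cost about the same.

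There is, however, a genuine gap in your final step. A pure amplitude rescaling $v_*\mapsto\kappa v_*$ cannot absorb the Lagrange multiplier: under $v=\kappa v_*$ the equation $-\Delta v_*+\frac{1}{m-1}v_*^{1/m}=\lambda|x|^\sigma v_*$ becomes
\begin{equation*}
	-\Delta v+\frac{\kappa^{(m-1)/m}}{m-1}\,v^{1/m}=\lambda|x|^\sigma v,
\end{equation*}
so the coefficient $\lambda$ in front of $|x|^\sigma v$ is untouched, because $-\Delta v$ and $|x|^\sigma v$ have the \emph{same} homogeneity in the amplitude; only the ratio between the sublinear term and the two linear terms can be adjusted this way. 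To set $\lambda=1$ you must also dilate in space; the paper uses $v(x)=\lambda^{2m/[(m-1)(\sigma+2)]}\,w(\lambda^{-1/(\sigma+2)}x)$, a two-parameter scaling whose very existence relies on $\sigma+2\neq 0$. Two further (minor) inaccuracies: testing with $\varphi=v_*$ gives $\lambda=\|\nabla v_*\|_2^2+\frac{1}{m-1}\|v_*\|_{(m+1)/m}^{(m+1)/m}$, not $2J(v_*)$, since the $L^{(m+1)/m}$ term of $J$ is not quadratic (positivity of $\lambda$ is of course unaffected); and the statement ``variational solution'' requires the weak formulation for all $\varphi\in H^1(\mathbb{R}^N)$, not merely $\varphi\in\mathcal{X}$, which in the paper needs an extra duality argument showing $|x|^\sigma v\in H^{-1}(\mathbb{R}^N)$ and hence $v^{1/m}\in H^{-1}(\mathbb{R}^N)$ --- a step your proposal omits.
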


\begin{proof}
Since $J$ is non-negative, it follows from Lemma~\ref{lem.radial} that
\begin{equation*}
	\nu =  \inf_{v\in\mathcal{A}} J(v) = \inf_{w\in\mathcal{A}_{SD}^{+}} J(w)
\end{equation*}
is well-defined and non-negative. There is thus a sequence $(w_j)_{j\geq1}$ such that $w_j\in\mathcal{A}_{SD}^{+}$ and
\begin{equation}
	\nu\leq J(w_j)\leq\nu+\frac{1}{j}, \quad j\geq1. \label{add05}
\end{equation}
It readily follows from~\eqref{add05} that $(w_j)_{j\geq1}$ is a bounded sequence in $\mathcal{X}$. By a classical compactness result (see \cite[Appendix~A.I]{BL83} for instance), there are a subsequence of $(w_j)_{j\geq1}$ (not relabeled) and a function $w\in \mathcal{X}$ such that
\begin{subequations}\label{conv}
\begin{equation}\label{conv.grad}
\nabla w_j\rightharpoonup\nabla w \quad {\rm in} \ L^2(\real^N),
\end{equation}
\begin{equation}\label{conv.Lm}
w_j\rightharpoonup w \quad {\rm in} \ L^{(m+1)/m}(\real^N),
\end{equation}
\begin{equation}\label{conv.L2}
w_j\to w \quad {\rm in} \ L^2(\real^N).
\end{equation}
\end{subequations}
A first consequence of Lemma~\ref{lem.embed} and the convergences in~\eqref{conv} is that $w\in L^2(\real^N,|x|^{\tau}\,dx)$ for any $\tau\in[-2,0)$, $\tau>-N$, and in particular for $\tau=\sigma$. Moreover,
\begin{equation}\label{interm4}
\|\nabla w\|_{2}^{2}\leq \liminf\limits_{j\to\infty}\|\nabla w_j\|_{2}^2, \quad \|w\|_{(m+1)/m}^{(m+1)/m}\leq\liminf\limits_{j\to\infty}\|w_j\|_{(m+1)/m}^{(m+1)/m}
\end{equation}
and we deduce from~\eqref{add05} and~\eqref{interm4} that
\begin{equation}\label{interm5}
J(w)\leq \liminf\limits_{j\to\infty}J(w_j)=\nu.
\end{equation}
The next step is to prove that $\mathcal{N}_{\sigma}(w)=1$. To this end, we fix $\tau\in (\max\{-N,-2\},\sigma)$ and observe that $\mathcal{X}$ embeds continuously in $L^2(\real^N,|x|^{\tau}\,dx)$ according to Lemma~\ref{lem.embed}. Therefore, for $\epsilon\in (0,1)$,
\begin{align*}
	\mathcal{N}_\sigma^2(w_j-w) & = \int_{B(0,\epsilon)} |x|^\sigma |(w_j-w)(x)|^2\,dx + \int_{\mathbb{R}^N\setminus B(0,\epsilon)} |x|^\sigma |(w_j-w)(x)|^2\,dx \\
	& \le \epsilon^{\sigma-\tau} \int_{B(0,\epsilon)} |x|^\tau |(w_j-w)(x)|^2\,dx + \epsilon^\sigma \int_{\mathbb{R}^N\setminus B(0,\epsilon)} |(w_j-w)(x)|^2\,dx \\
	& \le \epsilon^{\sigma-\tau} \mathcal{N}_\tau^2(w_j-w) + \epsilon^\sigma \|w_j-w\|_2^2 \\
	& \le C(\tau) \epsilon^{\sigma-\tau} \left( \|w\|_{\mathcal{X}}^2 + \sup_{j\ge 1} \|w_j\|_{\mathcal{X}}^2 \right) + \epsilon^\sigma \|w_j-w\|_2^2.
\end{align*}
We then let $j\to\infty$ and deduce from~\eqref{conv.L2} that
\begin{equation*}
	\limsup\limits_{j\to\infty} \mathcal{N}_\sigma^2(w_j-w) \le C(\tau) \epsilon^{\sigma-\tau} \left( \|w\|_{\mathcal{X}}^2 + \sup_{j\ge 1} \|w_j\|_{\mathcal{X}}^2 \right).
\end{equation*}
Recalling that $\sigma-\tau>0$, we then let $\epsilon\to 0$ and end up with
$$
	\lim\limits_{j\to\infty} \mathcal{N}_{\sigma}^2(w_j-w)=0.
$$
Since $w_j\in\mathcal{A}_{SD}^{+}$, we deduce from the previous convergence that $\mathcal{N}_{\sigma}^2(w)=1$. Also, the convergence~\eqref{conv.L2} implies the almost everywhere convergence of a subsequence of $(w_j)_{j\geq1}$ to $w$, from which the radial symmetry and monotonicity of $w$ follow. Therefore, we have just established that $w\in\mathcal{A}_{SD}^+$, while~\eqref{interm5} and the definition of $\nu$ ensure that $J(w)=\nu$. The minimizing property of $w$ then yields that, for any $\varphi\in C_0^\infty(\mathbb{R}^N)$,
\begin{equation}\label{interm6}
	\int_{\mathbb{R}^N} \left[ \nabla w(x)\cdot \nabla\varphi(x) + \frac{1}{m-1} w^{1/m}(x) \varphi(x) - \lambda |x|^{\sigma} w(x) \varphi(x) \right]\,dx =0
\end{equation}
with
\begin{equation*}
	\lambda := \|\nabla w\|_{2}^{2}+\frac{1}{m-1}\|w\|_{(m+1)/m}^{(m+1)/m}>0.
\end{equation*}
We next set
$$
v(x):=\lambda^{2m/[(m-1)(\sigma+2)]}w(\lambda^{-1/(\sigma+2)}x), \quad x\in\real^N.
$$
It is obvious that $v$ is a non-negative radially symmetric function with non-increasing profile and that $v\in \mathcal{X}$. Moreover, for $\varphi\in C_0^\infty(\mathbb{R}^N)$, the function $\psi: x\mapsto \varphi(\lambda^{1/(\sigma+2)}x)$ also belongs to $C_0^\infty(\mathbb{R}^N)$ and we have
\begin{align*}
	\int_{\mathbb{R}^N} \nabla w(x)\cdot \nabla\psi(x)\, dx & = \lambda^{-[2+N(m-1)]/[(m-1)(\sigma+2)]} \int_{\mathbb{R}^N} \nabla v(x)\cdot \nabla\varphi(x)\, dx, \\
	\int_{\mathbb{R}^N} w^{1/m}(x) \psi(x)\,dx & = \lambda^{-[2+N(m-1)]/[(m-1)(\sigma+2)]} \int_{\mathbb{R}^N} v^{1/m}(x) \varphi(x)\, dx, \\
	\lambda \int_{\mathbb{R}^N} |x|^{\sigma} w(x) \psi(x)\,dx & = \lambda^{-[2+N(m-1)]/[(m-1)(\sigma+2)]} \int_{\mathbb{R}^N} |x|^\sigma v(x) \varphi(x)\,dx.
\end{align*}
Inserting these identities in~\eqref{interm6} (with $\psi$ instead of $\varphi$) gives
\begin{equation}\label{add06}
	\int_{\mathbb{R}^N} \left[ \nabla v(x)\cdot \nabla\varphi(x) + \frac{1}{m-1} v^{1/m}(x) \varphi(x) - |x|^{\sigma} v(x) \varphi(x) \right]\,dx =0,
\end{equation}
so that $v$ is a weak solution to~\eqref{eq1}.

Let us finally prove that $v$ is a variational solution to~\eqref{eq1}; that is, the weak formulation~\eqref{add06} is satisfied for all $\varphi\in H^1(\mathbb{R}^N)$. Indeed, if $N\ge 3$ and $\varphi\in H^1(\mathbb{R}^N)$, then we infer from H\"older's inequality that
\begin{equation*}
	\left| \int_{\mathbb{R}^N} |x|^\sigma v(x) \varphi(x)\,dx \right| \le \int_{\mathbb{R}^N} |x|^{\sigma/2} v(x) |x|^{\sigma/2} |\varphi(x)|\,dx\le \mathcal{N}_\sigma(v) \mathcal{N}_\sigma(\varphi),
\end{equation*}
while the Sobolev inequality
\begin{equation}
	\|\varphi\|_{2^*} \le C_S \|\nabla\varphi\|_{2}, \qquad \varphi\in H^1(\mathbb{R}^N), \qquad 2^* := \frac{2N}{N-2}>2, \label{Isobolev}
\end{equation}
gives
\begin{align*}
	\mathcal{N}_\sigma^2(\varphi) & = \int_{B(0,1)} |x|^\sigma \varphi^2(x)\, dx + \int_{\mathbb{R}^N\setminus B(0,1)} |x|^\sigma \varphi^2(x)\, dx \\
	& \le \left( \int_{B(0,1)} |\varphi(x)|^{2^*}\,dx \right)^{2/2^*} \left( \int_{B(0,1)} |x|^{\sigma N/2}\,dx \right)^{2/N} + \int_{\mathbb{R}^N\setminus B(0,1)} \varphi^2(x)\, dx \\
	& \le C_S^2 \left( \frac{2\omega_N }{\sigma+2} \right)^{2/N} \|\nabla\varphi\|_{2}^2 + \|\varphi\|_2^2.
\end{align*}
Therefore, thanks to the subadditivity of the square root,
\begin{equation*}
	\left| \int_{\mathbb{R}^N} |x|^\sigma v(x) \varphi(x)\,dx \right| \le \mathcal{N}_\sigma(v) \left[ 1 + C_S \left( \frac{2\omega_N }{\sigma+2} \right)^{1/N} \right] \|\varphi\|_{H^1},
\end{equation*}
and a duality argument implies that $x\mapsto |x|^\sigma v(x)\in H^{-1}(\mathbb{R}^N)$. Since $\Delta v$ also belongs to $H^{-1}(\mathbb{R}^N)$ and $v^{1/m}(x) = (m-1) [\Delta v(x) + |x|^\sigma v(x)]$ in $\mathcal{D}'(\mathbb{R}^N)$, we conclude that $v^{1/m}\in H^{-1}(\mathbb{R}^N)$. Since the three terms involved in~\eqref{add06} belong to $H^{-1}(\mathbb{R}^N)$, a density argument allows us to extend the validity of~\eqref{add06} to any $\varphi\in H^1(\mathbb{R}^N)$.

For $N\in\{1,2\}$, we employ similar arguments and provide a sketch of the needed modifications. For $N=2$, we fix $p\in (1,2/|\sigma|)$ and use H\"older's inequality to obtain that
\begin{align*}
	\int_{B(0,1)} |x|^\sigma \varphi^2(x)\, dx & \le \left( \int_{B(0,1)} \varphi(x)^{2p/(p-1)}\, dx \right)^{(p-1)/p} \left( \int_{B(0,1)} |x|^{\sigma p}\, dx \right)^{1/p} \\
	& \le C(p) \left( \frac{2\pi}{2+\sigma p} \right)^{1/p} \|\varphi\|_{H^1}^2,
\end{align*}
thanks to the continuous embedding of $H^1(\mathbb{R}^2)$ in $L^{2p/(p-1)}(\mathbb{R}^2)$, and we proceed as above. For $N=1$, we just use the continuous embedding of $H^1(\mathbb{R})$ in $L^{\infty}(\mathbb{R})$
and the integrability of $x\mapsto |x|^\sigma$ on $B(0,1)=(-1,1)$ due to $\sigma\in (-1,0)$.
\end{proof}

\subsection{Minimizing property}\label{subsec.mp}

We next complete the proof of Theorem~\ref{th.1} by proving below that the minimizer $v_*$ constructed in Proposition~\ref{prop.existSD} is an extremal function to the CKN inequality~\eqref{minimizer}. To this end, we employ a classical technique based on a scaling argument. To simplify the notation, we set $\mu:=(m+1)/m$.

\begin{proof}[Proof of Theorem~\ref{th.1}: minimizing property]
For $w\in\mathcal{X}$ and $\lambda>0$, we set
\begin{equation*}
	w_\lambda(x) := \lambda w\left( \lambda^{2/(N+\sigma)} \mathcal{N}_\sigma(w)^{2/(N+\sigma)} x \right), \quad x\in\mathbb{R}^N.
\end{equation*}
Then we find by straightforward calculations that
\begin{align*}
	\|\nabla w_\lambda\|_2^2 & = \lambda^{2(\sigma+2)/(N+\sigma)} \mathcal{N}_\sigma(w)^{-2(N-2)/(N+\sigma)} \|\nabla w\|_2^2, \\
	\|w_\lambda\|_{\mu}^{\mu} & = \lambda^{[\sigma(m+1)-N(m-1)]/[m(N+\sigma)]} \mathcal{N}_\sigma(w)^{-2N/(N+\sigma)} \|w\|_{\mu}^{\mu}, \\
	\mathcal{N}_\sigma^2(w_\lambda) & := \mathcal{N}_\sigma(w)^{-2} \mathcal{N}_\sigma(w)^2 = 1.
\end{align*}	
Consequently, $w_\lambda\in\mathcal{A}$ and, since $J(v_*) \le J(w_\lambda)$ by  Proposition~\ref{prop.existSD}, we infer from the above formulas that
\begin{equation}
\mathcal{N}_\sigma(w)^{2N/(N+\sigma)}\le \mathcal{J}(\lambda,w) := \mathcal{N}_\sigma(w)^{2N/(N+\sigma)} \frac{J(w_\lambda)}{J(v_*)} , \label{add07}
\end{equation}
with
\begin{align*}
	\mathcal{J}(\lambda,w) & := A(w) \lambda^a + B(w) \lambda^{-b}, \\
	A(w) & := \frac{1}{2} \mathcal{N}_\sigma(w)^{4/(N+\sigma)} \frac{\|\nabla w\|_2^2}{J(v_*)}, \qquad a := \frac{2(\sigma+2)}{N+\sigma} >0 , \\
	B(w) & := \frac{m}{(m^2-1)} \frac{\|w\|_\mu^\mu}{J(v_*)}, \qquad b:= \frac{N(m-1)-\sigma(m+1)}{m(N+\sigma)}> 0.
\end{align*}
Since~\eqref{add07} is valid for any $\lambda>0$, by optimizing $J(\lambda,w)$ with respect to $\lambda$, we end up with
\begin{equation}
		\mathcal{N}_\sigma(w)^{2N/(N+\sigma)} \le j(w) := \mathcal{J}\left( \left[ \frac{b B(w)}{a A(w)} \right]^{1/(a+b)} , w\right) = \inf_{\lambda>0} \mathcal{J}(\lambda,w). \label{add08}
\end{equation}
On the one hand, it follows from the properties of $v_*$ that $A(v_*)+B(v_*)=1$, whence
\begin{equation}
	1 = \mathcal{N}_\sigma(v_*)^{2N/(N+\sigma)} = \mathcal{J}(1,v_*) = j(v_*). \label{add09}
\end{equation}
On the other hand, we infer from~\eqref{add08} that, for $w\in\mathcal{X}$,
\begin{align*}
	\mathcal{N}_\sigma(w)^{2N/(N+\sigma)} & \le j(w) = \mathcal{J}\left( \left[ \frac{b B(w)}{a A(w)} \right]^{1/(a+b)} , w\right) \\
	& = (a+b) a^{-a/(a+b)} b^{-b/(a+b)} A(w)^{b/(a+b)} B(w)^{a/(a+b)},
\end{align*}
or, equivalently,
\begin{equation}
	\mathcal{N}_\sigma(w)^{2N(a+b)/(N+\sigma)} \le j(w)^{a+b} = \frac{(a+b)^{a+b}}{a^a b^b} A(w)^b B(w)^a. \label{add10}
\end{equation}
Setting
\begin{equation*}
	J_* := \left( \frac{J(v_*)}{a+b} \right)^{a+b} \left( \frac{a(m^2-1)}{m} \right)^a (2b)^b,
\end{equation*}
we replace $A(w)$ and $B(w)$ by their formulas in~\eqref{add10} to find
\begin{equation*}
	\mathcal{N}_\sigma(w)^{[2N(a+b)-4b]/(N+\sigma)} \le \frac{j(w)^{a+b}}{\mathcal{N}_\sigma(w)^{4b/(N+\sigma)}} = \frac{\|\nabla w\|_2^{2b} \|w\|_\mu^{\mu a}}{J_*}.
\end{equation*}
Since
\begin{align*}
	\frac{2N(a+b)-4b}{N+\sigma} & = 2 \frac{N(m-1)+2(m+1)}{m(N+\sigma)}, \\
	2b & = \big[ \theta(\sigma+2)-\sigma \big] \frac{N(m-1)+2(m+1)}{m(N+\sigma)}, \\
	\mu a & = (\sigma+2)(1-\theta) \frac{N(m-1)+2(m+1)}{m(N+\sigma)},
\end{align*}
the parameter $\theta\in (0,1)$ being defined in~\eqref{theta}, we further obtain
\begin{equation}
		\mathcal{N}_\sigma^2(w) \le \left[ \frac{j(w)^{a+b}}{\mathcal{N}_\sigma(w)^{4b/(N+\sigma)}} \right]^{m(N+\sigma)/[N(m-1)+2(m+1)]} = \frac{\mathcal{N}_\sigma^2(w)\mathcal{S}(w)}{K_*}, \label{add11}
\end{equation}
with 
\begin{equation*}
	K_* := J_*^{m(N+\sigma)/[N(m-1)+2(m+1)]},
\end{equation*} 
recalling that $\mathcal{S}$ is defined by~\eqref{quotient}. We then readily infer from~\eqref{add11} that $\mathcal{S}(w)\ge K_*$ for all $w\in\mathcal{X}$, $w\not\equiv 0$, while the specific choice $w=v_*$ in~\eqref{add11} yields $\mathcal{S}(v_*)=K_*$ due to $v_*\in\mathcal{A}$ and~\eqref{add09}.
\end{proof}

\section{Qualitative properties}\label{sec.qp}

The aim of this section is to prove Theorem \ref{th.2}. For the reader's convenience, its proof is split into a number of steps contained in separate subsections. 

\subsection{Compactness of the support}\label{subsec.compact}

As already announced, compactness of the support of non-negative radially symmetric solutions to~\eqref{eq1} with non-increasing profiles is proved by a comparison argument. To this end, the following result will be very useful.

\begin{lemma}\label{lem.tail}
Let $v\in \mathcal{X}$ be a non-negative radially symmetric solution to~\eqref{eq1} with non-increasing profile. Then there is $r_0>0$ such that
$$
-\Delta v(x)+\frac{1}{2(m-1)}v^{1/m}(x)\leq 0, \quad x\in\real^N\setminus B(0,r_0).
$$
\end{lemma}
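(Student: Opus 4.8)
The plan is to exploit the sublinearity of the absorption term together with the decay of $v$ at infinity. The key observation is that
\[
	-\Delta v(x) + \frac{1}{2(m-1)} v^{1/m}(x) = |x|^\sigma v(x) - \frac{1}{2(m-1)} v^{1/m}(x),
\]
so the claim is equivalent to showing that $|x|^\sigma v(x) \le \frac{1}{2(m-1)} v^{1/m}(x)$ for $|x|$ large, i.e.\ that $|x|^\sigma v(x)^{(m-1)/m} \le \frac{1}{2(m-1)}$ outside a large ball. Since $\sigma<0$, the factor $|x|^\sigma$ tends to $0$ as $|x|\to\infty$, so it suffices to know that $v$ is bounded (or at least does not blow up too fast) at infinity. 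This is where the monotone radial structure enters: because $v$ is radially symmetric with non-increasing profile $V$, its behaviour at infinity is controlled by its behaviour at, say, $|x|=1$.

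First I would use the fact that $v\in\mathcal{X}\subset L^{(m+1)/m}(\mathbb{R}^N)$ together with the monotonicity of the profile to derive a pointwise decay bound. Precisely, for $|x|=r\ge 1$, since $V$ is non-increasing,
\[
	\omega_N r^N V(r)^{(m+1)/m} \le \int_{B(0,r)} v(y)^{(m+1)/m}\,dy \le \|v\|_{(m+1)/m}^{(m+1)/m},
\]
which gives $V(r) \le C r^{-Nm/(m+1)}$ for $r\ge 1$, with $C$ depending only on $N$, $m$, and $\|v\|_{\mathcal X}$. (Alternatively, one could extract decay from the $H^1$-bound via a similar truncation, but the $L^{(m+1)/m}$-bound is the cleanest.) In particular $v$ is bounded on $\mathbb{R}^N\setminus B(0,1)$, say $v(x)\le C_0$ there.

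Next I would simply combine the two facts. For $|x|=r\ge 1$ we have $v(x)^{(m-1)/m} \le C_0^{(m-1)/m}$, hence
\[
	|x|^\sigma v(x)^{(m-1)/m} \le C_0^{(m-1)/m} r^{\sigma} \longrightarrow 0 \quad \text{as } r\to\infty,
\]
so there exists $r_0\ge 1$ such that $C_0^{(m-1)/m} r_0^{\sigma} \le \frac{1}{2(m-1)}$, and then for all $|x|\ge r_0$,
\[
	|x|^\sigma v(x) = |x|^\sigma v(x)^{(m-1)/m}\, v(x)^{1/m} \le \frac{1}{2(m-1)} v(x)^{1/m}.
\]
Substituting into the equation~\eqref{eq1} (which $v$ satisfies in the sense of~\eqref{add06}, hence in $\mathcal{D}'(\mathbb{R}^N)$ and therefore on $\mathbb{R}^N\setminus B(0,r_0)$) yields exactly
\[
	-\Delta v(x) + \frac{1}{2(m-1)} v^{1/m}(x) = |x|^\sigma v(x) - \frac{1}{2(m-1)} v^{1/m}(x) \le 0
\]
on $\mathbb{R}^N\setminus B(0,r_0)$, as desired. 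I do not expect any serious obstacle here: the only mild point of care is making the decay estimate rigorous from membership in $\mathcal X$ using only the monotone radial profile, and ensuring the differential inequality is interpreted in the correct (distributional/weak) sense on the exterior domain; both are routine. The real work of turning this into compact support — constructing an explicit supersolution to the resulting inequality on $\mathbb{R}^N\setminus B(0,r_0)$ and comparing — is deferred to the subsequent step.
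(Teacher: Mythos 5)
Your proposal is correct and follows essentially the same route as the paper: both factor $|x|^\sigma v = |x|^\sigma v^{(m-1)/m}\,v^{1/m}$, bound $|x|^\sigma v^{(m-1)/m}$ on the exterior of a ball using the negativity of $\sigma$ together with the monotone radial profile (the paper bounds $V(|x|)^{(m-1)/m}$ by $V(\varrho)^{(m-1)/m}$ directly, while you first extract the explicit decay $V(r)\le Cr^{-Nm/(m+1)}$ from the $L^{(m+1)/m}$-norm, as the paper itself does later via the Radial Lemma of Berestycki--Lions), and then choose $r_0$ so that this prefactor is at most $1/2(m-1)$. No gap.
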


\begin{proof}
Given $\varrho>0$ and taking into account the symmetry and monotonicity of $v$, we have for $x\in\real^N\setminus B(0,\varrho)$
\begin{equation}\label{interm7}
|x|^{\sigma}v(x)=|x|^{\sigma}v^{1/m}(x)v^{(m-1)/m}(x)\leq \varrho^{\sigma} V(\varrho)^{(m-1)/m}v(x)^{1/m},
\end{equation}
where $V(|x|)=v(x)$, $x\in\mathbb{R}^N$. Since $V$ is non-increasing and $\sigma<0$, the application $\varrho\mapsto \varrho^{\sigma}V(\varrho)^{(m-1)/m}$ is decreasing and tends to zero as $\varrho\to\infty$. Thus, we can pick $r_0>0$ such that
\begin{equation*}
r_0^{\sigma}V(r_0)^{(m-1)/m}\leq\frac{1}{2(m-1)}.
\end{equation*}
Consequently, \eqref{eq1} and~\eqref{interm7} (with $\varrho=r_0$) give
$$
-\Delta v(x)+\frac{1}{m-1}v^{1/m}(x)=|x|^{\sigma}v(x)\leq\frac{1}{2(m-1)}v^{1/m}(x), \quad x\in\real^N\setminus B(0,r_0),
$$
from which the claim follows.
\end{proof}

The next technical step is the construction of a family of supersolutions that will be employed for comparison later on.

\begin{lemma}\label{lem.super}
Let $(a,b,c)\in (0,\infty)^3$. Then
$$
V_{a,b}(x) := \big(a-b|x|^2\big)_{+}^{2m/(m-1)}, \quad x\in\real^N,
$$
is a supersolution to the equation
\begin{equation*}
-\Delta v(x)+cv^{1/m}(x)=0, \quad x\in\real^N,
\end{equation*}
provided $ab\leq c(m-1)^2/8m(m+1)$.
\end{lemma}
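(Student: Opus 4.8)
The plan is to verify directly that $V_{a,b}$ is a (weak) supersolution by computing its Laplacian on the set where it is positive, namely the ball $B(0,\rho)$ with $\rho := \sqrt{a/b}$, and checking that the resulting inequality $-\Delta V_{a,b} + c V_{a,b}^{1/m} \ge 0$ holds there, while outside $B(0,\rho)$ both terms vanish and the required boundary behaviour across $\partial B(0,\rho)$ is favourable (the profile has a negative "corner", which only helps for supersolutions). So the heart of the matter is the pointwise computation inside $B(0,\rho)$.

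\textbf{Key steps.} First I would set $k := 2m/(m-1) > 2$ and write, for $|x| < \rho$, $V_{a,b}(x) = (a - b|x|^2)^k$ with $a - b|x|^2 > 0$. Using $\Delta(g(|x|^2)) = 2N g'(|x|^2) + 4|x|^2 g''(|x|^2)$ with $g(s) = (a-bs)^k$, one gets
\begin{equation*}
\Delta V_{a,b}(x) = -2Nbk (a-b|x|^2)^{k-1} + 4b^2 k(k-1) |x|^2 (a-b|x|^2)^{k-2}.
\end{equation*}
Next I would factor out $(a-b|x|^2)^{k-2} \ge 0$ and bound the bracketed quantity from above: writing $b|x|^2 \le a$ on $B(0,\rho)$,
\begin{equation*}
-2Nbk(a-b|x|^2) + 4b^2 k(k-1)|x|^2 \le 4b^2 k(k-1)|x|^2 \le 4 a b k(k-1),
\end{equation*}
so that $\Delta V_{a,b}(x) \le 4abk(k-1) (a-b|x|^2)^{k-2}$. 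Since $V_{a,b}^{1/m}(x) = (a-b|x|^2)^{k/m} = (a-b|x|^2)^{k-2}$ (because $k/m = k - 2$, as $2m/(m-1) - 2 = 2/(m-1)$), the supersolution inequality $-\Delta V_{a,b} + c V_{a,b}^{1/m} \ge 0$ on $B(0,\rho)$ reduces to $c \ge 4abk(k-1)$, i.e. $ab \le c/[4k(k-1)]$. Finally I would compute $4k(k-1) = 4 \cdot \frac{2m}{m-1} \cdot \frac{m+1}{m-1} = \frac{8m(m+1)}{(m-1)^2}$, which gives exactly the stated condition $ab \le c(m-1)^2/[8m(m+1)]$.

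\textbf{Wrapping up the weak formulation.} The remaining point is to make the word "supersolution" precise across the sphere $|x| = \rho$ where $V_{a,b}$ is merely $C^1$ (its radial derivative is continuous since $k - 1 > 1$, but the second derivative jumps). I would argue that $V_{a,b} \in C^1(\mathbb{R}^N)$ with $\nabla V_{a,b}$ Lipschitz, so the distributional Laplacian of $V_{a,b}$ equals its classical (a.e.) Laplacian, which is $\le 4abk(k-1) V_{a,b}^{1/m}$ a.e.; hence for every non-negative $\varphi \in C_0^\infty(\mathbb{R}^N)$,
\begin{equation*}
\int_{\mathbb{R}^N} \big[ \nabla V_{a,b}\cdot\nabla\varphi + c\, V_{a,b}^{1/m}\varphi \big]\,dx = \int_{\mathbb{R}^N} \big[ -\Delta V_{a,b} + c\, V_{a,b}^{1/m}\big]\varphi\,dx \ge 0,
\end{equation*}
which is the required supersolution property. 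I do not expect any real obstacle here; the only mild subtlety is keeping track that $k > 2$ so that $(a-b|x|^2)^{k-2}$ is a genuine (non-singular, continuous) nonnegative factor and the a.e. identification of the Laplacian is legitimate — everything else is the bookkeeping of the constants above.
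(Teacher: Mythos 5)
Your proposal is correct and follows essentially the same route as the paper: the same explicit computation of $\Delta V_{a,b}$ on the positivity set, discarding the term with the favourable sign, using $k/m=k-2$ and $b|x|^2\le a$ to reduce everything to the condition $ab\le c/[4k(k-1)]=c(m-1)^2/[8m(m+1)]$. The only (harmless) difference is that the paper rewrites $b|x|^2=a-(a-b|x|^2)$ exactly before discarding the non-negative term, whereas you bound directly; your extra remark on the $C^{1,1}$ matching across $|x|=\sqrt{a/b}$ is a welcome precision the paper leaves implicit.
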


\begin{proof}
Set $\omega:=2m/(m-1)>2$. We then compute, for $x\in\real^N$,
$$
\nabla V_{a,b}(x)=-2b\omega(a-b|x|^2)_{+}^{\omega-1}x
$$
and
\begin{equation*}
\begin{split}
\Delta V_{a,b}(x)&=-2Nb\omega(a-b|x|^2)_{+}^{\omega-1}+4b^2\omega(\omega-1)|x|^2(a-b|x|^2)_{+}^{\omega-2}\\
&=-2b\omega[N+2(\omega-1)](a-b|x|^2)_{+}^{\omega-1}+4ab\omega(\omega-1)(a-b|x|^2)_{+}^{\omega-2}.
\end{split}
\end{equation*}
Hence,
\begin{equation*}
\begin{split}
-\Delta V_{a,b}(x)+cV_{a,b}^{1/m}(x)&= 2b\omega[N+2(\omega-1)](a-b|x|^2)_{+}^{\omega-1}\\
&-4ab\omega(\omega-1)(a-b|x|^2)_{+}^{\omega-2}+c(a-b|x|^2)_{+}^{\omega/m}.
\end{split}
\end{equation*}
Since $\omega>2$ and $\omega/m=\omega-2$, we discard the first term and we further infer that
$$
-\Delta V_{a,b}(x)+cV_{a,b}^{1/m}(x)\geq[c-4ab\omega(\omega-1)](a-b|x|^2)_{+}^{\omega-2}
$$
and the right hand side of the above inequality is non-negative, provided
$$
ab\leq\frac{c}{4\omega(\omega-1)}=\frac{c(m-1)^2}{8m(m+1)},
$$
as stated.
\end{proof}

We are now in a position to prove that the non-negative radially symmetric variational solutions to~\eqref{eq1} with non-increasing profiles have compact support.

\begin{proposition}\label{prop.compact}
Let $v\in H^1(\real^N)\cap L^{(m+1)/m}(\real^N)$ be a non-negative radially symmetric solution to~\eqref{eq1} with non-increasing profile. Then $v$ is compactly supported.
\end{proposition}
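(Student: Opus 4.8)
The plan is to combine the previous three lemmas by a comparison argument on an exterior domain. By Lemma~\ref{lem.tail}, there is $r_0>0$ such that $-\Delta v + \tfrac{1}{2(m-1)} v^{1/m} \le 0$ on $\real^N\setminus B(0,r_0)$, so $v$ is a subsolution on this exterior region to the equation treated in Lemma~\ref{lem.super} with $c=\tfrac{1}{2(m-1)}$. The idea is then to trap $v$ underneath a compactly supported supersolution $V_{a,b}$ of the form $(a-b|x|^2)_+^{2m/(m-1)}$, suitably translated so that its support is a ball $B(x_0,R)$ disjoint from $B(0,r_0)$; since $v$ is bounded on the sphere $\{|x|=r_0\}$ (being radial, monotone and in $\mathcal X$, hence finite there — or at worst one first restricts attention to the region $|x|\ge r_0+1$ where the monotone profile $V$ is genuinely finite), one can choose the parameters so that $V_{a,b}$ dominates $v$ on the whole boundary of the relevant exterior domain and at infinity.

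Concretely, I would fix $r_1>r_0$ and set $\kappa := V(r_1) = \sup_{|x|\ge r_1} v(x) < \infty$. For any direction $e\in\mathbb S^{N-1}$ consider the shifted function $W(x) := V_{a,b}(x - x_0)$ with center $x_0 = (r_1 + \rho)e$ for a radius parameter $\rho$ to be chosen, so $W$ is supported in $\overline{B(x_0,\sqrt{a/b})}$; choosing $\sqrt{a/b}$ slightly larger than $\rho$ ensures this ball meets the half-space $\{x\cdot e > r_1\}$ but I in fact want it contained in $\{|x|\ge r_1\}$, so I take $x_0$ far out and $\sqrt{a/b}$ moderate. The constraint from Lemma~\ref{lem.super} is only $ab \le c(m-1)^2/(8m(m+1))$, which leaves two free parameters; I use one to make the support radius $\sqrt{a/b}$ whatever size is convenient and the other (equivalently the height $a^{2m/(m-1)}$) large enough that $W \ge \kappa \ge v$ on the part of the sphere $\{|x|=r_1\}$ lying inside $\supp W$, while $W=0=$ (lower bound for nothing) elsewhere. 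Then on the open set $U := B(x_0,\sqrt{a/b})\cap\{|x|>r_1\}$ one has: $v$ subsolution, $W$ supersolution of $-\Delta u + c u^{1/m}=0$, and $v\le W$ on $\partial U$. The comparison principle for this equation (the nonlinearity $u\mapsto c u^{1/m}$ is non-decreasing, so the operator is monotone and the comparison principle applies to $H^1$ sub/supersolutions) gives $v\le W$ on $U$, hence $v(x_0 e') \to$ forces $v$ to vanish at points beyond the support of $W$. Letting $e$ range over $\mathbb S^{N-1}$ and pushing $x_0$ outward shows $v\equiv 0$ outside a fixed ball.

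Actually the cleanest route avoids shifting: since $v$ is radial and non-increasing, it suffices to show $V(r)=0$ for $r$ large. So I would instead compare on an annulus. Pick $r_1>r_0$, let $\kappa=V(r_1)$, and seek a radial supersolution on $\{r>r_1\}$ of the form $W(x)=(a-b(|x|-r_1)^2)_+^{2m/(m-1)}$ — but this is not literally covered by Lemma~\ref{lem.super}. The translation trick is the honest way to reduce to Lemma~\ref{lem.super} verbatim: for each unit vector $e$, compare $v$ with $V_{a,b}(\cdot - x_0)$, $x_0 = \rho e$, on $U=\{x\cdot e > r_1\}\cap B(x_0,\sqrt{a/b})$; note $\{|x|\le r_1\}\cap U = \emptyset$ provided $\rho - \sqrt{a/b} \ge$ is arranged so $B(x_0,\sqrt{a/b})\subset\{|x|>r_1\}$, which needs $\rho > r_1 + \sqrt{a/b}$, easily satisfied by taking $\rho$ large. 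Choose $b$ small and $a$ accordingly (respecting $ab\le c(m-1)^2/(8m(m+1))$) so that $a^{2m/(m-1)}\ge\kappa$; then on $\partial U$ either $x\in\partial B(x_0,\sqrt{a/b})$ where $V_{a,b}(x-x_0)=0$ but also... hmm, here $v$ need not vanish. So one wants $\supp V_{a,b}(\cdot-x_0)$ to reach out to where $v$ is already very small: take $\sqrt{a/b}$ so that $x_0 - \sqrt{a/b} e$, the near edge of the support, equals $r_2 e$ with $r_2 > r_1$; on the sphere $|x|=r_2$ we have $v \le V(r_2)$, and on the far part of $\partial U$, $v\le V(r_1')$ — all finite and $\le\kappa$. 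Matching $a^{2m/(m-1)}$ to the value of $V_{a,b}$ at that near edge (which is $a^{2m/(m-1)}$ at the tip, decreasing outward) and using that $V_{a,b}(\cdot - x_0)$ on the sphere $|x|=r_2\cap \partial U$ equals $a^{2m/(m-1)}\ge\kappa\ge v$ there closes the comparison, yielding $v\le V_{a,b}(\cdot-x_0)$ on $U$, and in particular $v(x)=0$ for $x\cdot e$ beyond $x_0\cdot e + \sqrt{a/b}$. Rotating $e$ over the sphere and using radial symmetry gives $V(r)=0$ for all $r$ larger than some fixed value, i.e.\ $v$ is compactly supported.

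The main obstacle is the bookkeeping of the geometry and parameter choices so that (i) the ball $\supp V_{a,b}(\cdot - x_0)$ lies entirely in the exterior region $\{|x|>r_0\}$ where $v$ is a subsolution, (ii) $V_{a,b}(\cdot - x_0)$ dominates $v$ on the entire boundary of the comparison domain, and (iii) the constraint $ab\le c(m-1)^2/(8m(m+1))$ from Lemma~\ref{lem.super} is respected — these are compatible because $b$ can be taken arbitrarily small, making the support as large and the admissible height as large as needed. A secondary point requiring care is justifying the comparison principle in the weak ($H^1$) setting for the operator $-\Delta u + c|u|^{1/m-1}u$ (or $cu^{1/m}$ on non-negative functions): this follows from testing the difference of the weak formulations with $(v - V_{a,b}(\cdot-x_0))_+\in H_0^1(U)$ and using the monotonicity of $s\mapsto s^{1/m}$, which kills the zeroth-order contribution and leaves $\int_U |\nabla (v-W)_+|^2 \le 0$.
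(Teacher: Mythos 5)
Your overall strategy (combine Lemma~\ref{lem.tail} with the explicit compactly supported supersolution of Lemma~\ref{lem.super} and a weak comparison argument) is the right one and matches the paper's, but the geometric setup of the comparison is where the argument breaks, and you never repair it. By translating $V_{a,b}$ so that its support is a ball disjoint from $B(0,r_0)$ (or from $B(0,r_1)$), you force the comparison domain $U$ to have an outer boundary component lying on $\partial B(x_0,\sqrt{a/b})$, where $V_{a,b}(\cdot-x_0)=0$ while $v$ has no reason to vanish --- this is precisely the inequality you are trying to prove. You notice this (``hmm, here $v$ need not vanish''), but the proposed fix --- letting the support ``reach out to where $v$ is already very small'' and matching heights at the near tip --- does not close the argument: ``very small'' is not zero, the tip is a single point, and on the rest of the outer sphere the ordering $v\le V_{a,b}(\cdot-x_0)$ still fails. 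As written, $(v-W)_+$ is not in $H^1_0(U)$ and the energy comparison cannot be run.

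The resolution is simply not to translate. Keep $V_{a,b}$ centered at the origin and arrange its support to \emph{contain} $B(0,r_0)$: with $M_0:=\sup_{\partial B(0,r_0)}v<\infty$, choose $a=br_0^2+M_0^{(m-1)/(2m)}$ and $b$ small enough that $ab\le (m-1)/(4m(m+1))$, so that $V_{a,b}\equiv M_0\ge v$ on $\partial B(0,r_0)$. Then compare on the exterior domain $\mathbb{R}^N\setminus B(0,r_0)$: there $v$ is a subsolution by Lemma~\ref{lem.tail}, $V_{a,b}$ is a global supersolution, and the only finite boundary component is $\partial B(0,r_0)$, where the ordering holds. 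Your closing remark about testing with $(v-W)_+$ and using the monotonicity of $s\mapsto s^{1/m}$ is then the correct mechanism, except that the domain is unbounded, so one cannot directly take $(v-W)_+$ as a test function; the paper handles this by inserting cutoffs $\varphi_n(\cdot)=\varphi(\cdot/n)$, moving the derivatives onto $\varphi_n$ to get a term of order $n^{-2}\|W_+\|_2^2$ (finite since $v\in L^2$ by the embedding of $\mathcal{X}$), and concluding with Fatou's lemma. With that correction the proof goes through and coincides with the paper's.
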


\begin{proof}
By Lemma~\ref{lem.tail}, there is $r_0>0$ such that
\begin{equation}\label{interm8}
-\Delta v(x)+\frac{1}{2(m-1)}v^{1/m}(x)\leq 0, \quad x\in\real^N\setminus B(0,r_0).
\end{equation}
Since $v\in L^{(m+1)/m}(\mathbb{R}^N)$, we infer from \cite[Radial Lemma~A.IV]{BL83} that
\begin{equation*}
|v(x)|\leq C\|v\|_{(m+1)/m}|x|^{-mN/(m+1)}, \quad x\in\real^N\setminus\{0\},
\end{equation*}
hence $M_0:=\sup\{v(x): x\in\partial B(0,r_0)\}<\infty$. It next follows from Lemma~\ref{lem.super} with $c=1/2(m-1)$ that
\begin{equation}\label{interm9}
-\Delta V_{a,b}(x)+\frac{1}{2(m-1)}V_{a,b}^{1/m}(x)\geq0, \quad x\in\real^N,
\end{equation}
provided $ab\leq (m-1)/4m(m+1)$. Choosing $a$ and $b$ in order to satisfy
$$
a=br_0^2+M_0^{(m-1)/2m}, \quad b(br_0^2+M_0^{(m-1)/2m})\leq\frac{m-1}{4m(m+1)},
$$
we also observe that
$$
V_{a,b}(x)=(a-br_0^2)^{2m/(m-1)} = M_0 \geq v(x), \quad x\in\partial B(0,r_0).
$$
Consider now $\varphi\in C_0^{\infty}(\real^N)$ such that $0\leq\varphi\leq1$, $\varphi=1$ on $B(0,1)$ and $\varphi=0$ on $\real^N\setminus B(0,2)$ and set $\varphi_n(x):=\varphi(x/n)$ for $x\in\real^N$ and $n\geq1$. Introducing $W:=v-V_{a,b}$, we infer from~\eqref{interm8} and~\eqref{interm9} that, for $n\geq1$,
$$
-\varphi_nW_{+}\Delta W\leq \varphi_nW_+\left[-\frac{1}{2(m-1)}v^{1/m}+\frac{1}{2(m-1)}V_{a,b}^{1/m}\right] \quad {\rm in} \ \real^N\setminus B(0,r_0).
$$
We then obtain by integration on $\real^N\setminus B(0,r_0)$ that
$$
-\int_{\real^N\setminus B(0,r_0)}\varphi_nW_{+}\Delta W\,dx+\int_{\real^N\setminus B(0,r_0)}\varphi_nW_{+}\frac{v^{1/m}-V_{a,b}^{1/m}}{2(m-1)}\,dx\leq0.
$$
We integrate by parts in the first integral of the previous estimate, taking into account that $W_+=0$ on $\partial B(0,r_0)$ and 
\begin{equation*}
	\nabla W_+\cdot \nabla W = |\nabla W_+|^2 \;\text{ and }\; W_+\nabla W = \frac{1}{2} \nabla W_+^2 \;\;\text{ a.e. in }\;\; \mathbb{R}^N
\end{equation*}
to obtain that
\begin{equation}\label{interm10}
\frac{1}{2}\int_{\real^N\setminus B(0,r_0)}\nabla W_{+}^2\cdot\nabla\varphi_n\,dx+\frac{1}{2(m-1)}\int_{\real^N\setminus B(0,r_0)}\varphi_nW_+(v^{1/m}-V_{a,b}^{1/m})\,dx\leq0.
\end{equation}
Using again that $W_+=0$ on $\partial B(0,r_0)$ and integrating by parts in the first term of~\eqref{interm10} lead to
\begin{equation*}
\begin{split}
\frac{1}{m-1}\int_{\real^N\setminus B(0,r_0)}\varphi_nW_+(v^{1/m}-V_{a,b}^{1/m})\,dx&\leq\int_{\real^N\setminus B(0,r_0)}W_{+}^2\Delta\varphi_n\,dx\\&\leq\frac{1}{n^2}\int_{\real^N\setminus B(0,r_0)}W_+^2\Delta\varphi\left(\frac{x}{n}\right)\,dx\\
&\leq\frac{\|\Delta\varphi\|_{\infty}}{n^2}\int_{\real^N\setminus B(0,r_0)}W_{+}^2\,dx.
\end{split}
\end{equation*}
On the one hand, we recall that $\mathcal{X}$ is continuously embedded in $L^2(\real^N)$ since $(m+1)/m<2<2^*$, so that $v\in L^2(\real^N)$. Thus, $W_{+}\in L^2(\real^N\setminus B(0,r_0))$ and we find
\begin{equation}\label{interm11}
\lim\limits_{n\to\infty}\frac{\|\Delta\varphi\|_{\infty}}{n^2}\int_{\real^N\setminus B(0,r_0)}W_{+}^2\,dx=0.
\end{equation}
On the other hand, we recall that $\varphi_n\geq0$ in $\real^N$, $\lim\limits_{n\to\infty}\varphi_n(x)=1$ pointwisely in $\real^N$ and
$$
W_{+}(v^{1/m}-V_{a,b}^{1/m})\geq0 \quad {\rm in} \ \real^N\setminus B(0,r_0).
$$
We thus readily infer from Fatou's lemma and~\eqref{interm11} that
$$
0\leq\int_{\real^N\setminus B(0,r_0)}W_{+}(v^{1/m}-V_{a,b}^{1/m})\,dx\leq\liminf\limits_{n\to\infty}\int_{\real^N\setminus B(0,r_0)}\varphi_nW_{+}(v^{1/m}-V_{a,b}^{1/m})\,dx\leq0,
$$
whence
$$
\int_{\real^N\setminus B(0,r_0)}W_{+}(v^{1/m}-V_{a,b}^{1/m})\,dx=0.
$$
The latter implies that $v(x)\leq V_{a,b}(x)$ for $x\in\real^N\setminus B(0,r_0)$, from which we deduce that $v$ has compact support and the proof is complete.
\end{proof}

\subsection{Boundedness and regularity}\label{subsec.br}

Let $v\in\mathcal{X}$ be a non-negative radially symmetric solution to~\eqref{eq1} with non-increasing profile. The radial symmetry, monotonicity, and $L^{(m+1)/m}$-integrability of $v$ guarantee that $v$ belongs to $L^\infty\big(\mathbb{R}^N\setminus B(0,R)\big)$ for any $R>0$, but we do not know yet whether $v(0)$ is finite, or equivalently, $v\in L^\infty(\mathbb{R}^N)$. At this stage, the boundedness of $v$ is only known when $N=1$, as a consequence of the continuous embedding of $H^1(\mathbb{R})$ in $L^\infty(\mathbb{R})$. To handle higher space dimensions, a refined study of the behavior of $v$ near $x=0$ is needed and the next lemma is a first step in that direction.

\begin{lemma}\label{lem.moments}
Let $N\geq 3$. For any $\tau\in (-N,0)$, we have $\mathcal{N}_{\tau}(v)<\infty$ and $\mathcal{N}_{\tau+2}(|\nabla v|)<\infty$.
\end{lemma}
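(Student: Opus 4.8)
The plan is to bootstrap from the CKN-type embedding of Lemma~\ref{lem.embed}, which only gives $\mathcal{N}_\tau(v)<\infty$ for $\tau\in[-2,0)$, down to the full range $\tau\in(-N,0)$ by an iteration that gains two powers of $|x|$ at each step. The starting point is that $v\in\mathcal{X}$, hence $\mathcal{N}_\sigma(v)<\infty$, and more generally $\mathcal{N}_\tau(v)<\infty$ for every $\tau\in[-2,0)$. The mechanism for gaining integrability is the equation itself: since $-\Delta v = |x|^\sigma v - \frac{1}{m-1}v^{1/m}$, testing against a weight of the form $|x|^\alpha v$ (suitably truncated near the origin to stay in the admissible class, then passing to the limit) and integrating by parts produces, on the left, a term controlling $\mathcal{N}_{\alpha-2}(|\nabla v|)^2$ up to lower-order contributions from differentiating the weight, and on the right the quantity $\int |x|^{\sigma+\alpha} v^2\,dx$, which is finite as soon as $\sigma+\alpha\ge-2$ by what is already known. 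The absorption term $\frac{1}{m-1}\int |x|^\alpha v^{1+1/m}\,dx$ has a favorable sign and can be discarded or, conversely, used to absorb bad terms; near the origin it is harmless because $v$ is bounded there is exactly what we are trying to prove, so instead one works with $v\wedge k$ or uses that $v^{1/m}\le $ (const) $v$ on the support region where $v$ is large, or simply keeps $v^{1+1/m}\le C v^2$ is false in general — better to keep that term nonnegative and throw it away. This yields a weighted Hardy/Caffarelli--Kohn--Nirenberg inequality of the form $\mathcal{N}_{\alpha}(v)\le C\,\mathcal{N}_{\alpha-2}(|\nabla v|)$ (valid for $\alpha-2>-N$, i.e. $\alpha>2-N$), and combined with the integration-by-parts bound on $\mathcal{N}_{\alpha-2}(|\nabla v|)$ in terms of $\mathcal{N}_{\sigma+\alpha}(v)$ and $\mathcal{N}_{\alpha-2}(v)$, one closes the loop: finiteness of $\mathcal{N}_{\alpha-2}(v)$ plus $\mathcal{N}_{\sigma+\alpha}(v)<\infty$ (the latter true once $\sigma+\alpha\ge-2$, hence once $\alpha\ge-2-\sigma>-2$) gives $\mathcal{N}_\alpha(v)<\infty$ and $\mathcal{N}_{\alpha}(|\nabla v|)\cdot$ — more precisely $\mathcal{N}_{\alpha}(v)<\infty$ with $\alpha$ two units below the previous gradient weight, so after finitely many steps one reaches any prescribed $\tau\in(-N,0)$.

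Concretely I would argue as follows. Fix $\tau\in(-N,0)$; since $\tau>-N$ there is a finite increasing chain $-2=\tau_0>\tau_1>\cdots>\tau_n=\tau$ with $\tau_{j+1}\ge\tau_j-(\sigma+2)$ wait — rather: I want each step to lower the exponent by strictly less than $\min\{2,-\sigma\}$ so as to stay above $-N$ and so that the right-hand weight $\sigma+\tau_{j}$ stays $\ge-2$; since $\sigma\in(-2,0)$ both $2$ and $-\sigma$ are positive, so a chain with steps of size $\delta:=\tfrac12\min\{2,-\sigma\}$ works and has length $n=\lceil(\tau_0-\tau)/\delta\rceil$. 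At step $j$, assuming inductively $\mathcal{N}_{\tau_j}(v)<\infty$ (true for $j=0$ by Lemma~\ref{lem.embed}), I use the weighted energy identity obtained from~\eqref{eq1} tested with $|x|^{\tau_j+?}$ — let me index so that I test with weight $|x|^{2\tau_{j+1}-\tau_j}\,v$ — to get
\begin{equation*}
	c\,\mathcal{N}_{\tau_{j+1}-?}(|\nabla v|)^2 \le C\,\mathcal{N}_{\sigma+\cdots}(v)^2 + \text{(lower order)},
\end{equation*}
and then a weighted Hardy inequality $\int|x|^{2\tau_{j+1}}v^2\,dx\le C\int|x|^{2\tau_{j+1}+2}|\nabla v|^2\,dx$ — valid precisely because $2\tau_{j+1}+2>2(-N)/2=-N$ is not quite it; the sharp condition for the radial Hardy inequality $\int_0^\infty r^{\beta}V^2\,r^{N-1}dr\le C\int_0^\infty r^{\beta+2}|V'|^2 r^{N-1}dr$ on compactly supported $V$ is $\beta+N\neq0$, with constant $4/(\beta+N)^2$, and here $V$ has compact support away from infinity by Proposition~\ref{prop.compact}, so all that is needed is $2\tau_{j+1}+N-2\ne-?$ — I would just invoke the one-dimensional weighted Hardy inequality on $(0,\infty)$ with the measure $r^{N-1}dr$, whose validity for compactly supported functions requires only that the weight exponent avoid the single critical value, which our strict inequalities arrange. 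Chaining the energy bound and the Hardy bound gives $\mathcal{N}_{\tau_{j+1}}(v)^2\le C(\mathcal{N}_{\sigma+\tau_j}(v)^2+\mathcal{N}_{\tau_j}(v)^2)<\infty$, closing the induction; the final gradient bound $\mathcal{N}_{\tau+2}(|\nabla v|)<\infty$ for the prescribed $\tau$ is then read off from the energy identity at the last step with $\sigma+\tau_j$ and $\tau_j$ on the right, both finite.

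The main obstacle — the step I expect to require the most care — is making the weighted testing rigorous near $x=0$. The natural test function $|x|^{\beta}v$ is not admissible because $v$ is not yet known to be bounded and $|x|^\beta$ with negative $\beta$ is singular, so the integrations by parts are formal. I would fix this with a double approximation: first replace $v$ by $v_k:=\min\{v,k\}$ (or by $(v-\epsilon)_+$) to get an $L^\infty$, still $H^1$, function and exploit that $v^{1/m}\le v_k^{1/m}$ where it matters; second, replace the weight $|x|^\beta$ by the regularised weight $(\epsilon+|x|^2)^{\beta/2}$, which is smooth and bounded for $\epsilon>0$; then $\psi_{\epsilon,k}:=(\epsilon+|x|^2)^{\beta/2}v_k\in H^1(\mathbb{R}^N)$ is a legitimate test function in the variational formulation~\eqref{add06}. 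Carrying out the integration by parts, one obtains an identity with a good sign on the main gradient term, an error term from $\nabla[(\epsilon+|x|^2)^{\beta/2}]$ of the form $\int \beta(\epsilon+|x|^2)^{\beta/2-1}x\cdot\nabla v_k\,v_k$ which after a further integration by parts becomes lower order in the induction, and the right-hand side $\int(\epsilon+|x|^2)^{\beta/2}|x|^\sigma v v_k$; one then lets $k\to\infty$ by monotone convergence and $\epsilon\to0$ by Fatou on the gradient term and dominated/monotone convergence on the others, all controlled by the inductive hypothesis $\mathcal{N}_{\tau_j}(v)<\infty$ together with $\mathcal{N}_{\sigma+\tau_j}(v)<\infty$. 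A secondary technical point is bookkeeping the admissible range of exponents so that at every step both $\sigma+\tau_j>-N$ (automatic since $\tau_j\ge-2>\sigma-N$ would need $\sigma>2-N$... here I instead only ever need $\sigma+\tau_j\ge-2$, which holds as soon as $\tau_j\ge-2-\sigma$, and the very first value $\tau_0=-2$ may violate this — so I would actually start the chain not at $-2$ but choose the direction of iteration carefully, beginning from $\tau=-2$ and noting $\sigma+(-2)=\sigma-2<-2$, hmm) — this sign bookkeeping is exactly where I would be most careful, and it is resolved by observing that one does not need $\mathcal{N}_{\sigma+\tau_j}(v)$ but rather can, at each step, bound the right-hand side using Hölder against $\mathcal{N}_{\tau_j}(v)$ and $\mathcal{N}_{2\tau_{j+1}-\tau_j+2\sigma-\text{?}}$ — ultimately the clean statement is: the gain-two-powers iteration works because the "loss" from the Hardy step (two powers down) always exceeds the "cost" $|\sigma|<2$ of the potential, so a finite number of steps starting from $\tau=-2$ (where everything is finite) reaches every $\tau\in(-N,0)$, provided $\sigma>-2$, which is exactly assumption~\eqref{exp}.
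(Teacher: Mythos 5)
Your proposal is essentially the paper's proof: the authors test \eqref{eq1} against $v(x)(|x|^2+\epsilon)^{-k}$, integrate by parts, discard the nonnegative absorption term, and note that the cross term coming from differentiating the weight produces $k(N-2-2k)\,\mathcal{N}_{-2k-2}^2(v)$ with a positive coefficient precisely when $-2k-2>-N$ (this is your Hardy step, obtained for free from the integration by parts rather than invoked as a separate inequality), while the right-hand side is bounded by $\mathcal{N}_{\sigma-2k}^2(v)$; letting $\epsilon\to 0$ via Fatou yields the implication $\mathcal{N}_{\tau+\sigma+2}(v)<\infty \Rightarrow \mathcal{N}_{\tau}(v)<\infty$ and $\mathcal{N}_{\tau+2}(|\nabla v|)<\infty$ for $\tau\in(-N,-2)$, which is then iterated downward from the range $(-2,0)$ already covered by Lemma~\ref{lem.embed}. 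The one point in your bookkeeping that must be fixed is the step size: the net gain per step is exactly $\sigma+2=2-|\sigma|$, so the chain must satisfy $\tau_j-\tau_{j+1}\le\sigma+2$, whereas your provisional choice $\delta=\tfrac12\min\{2,-\sigma\}=|\sigma|/2$ exceeds this whenever $\sigma<-4/3$; your closing sentence does state the correct criterion, and the paper simply takes $\tau_j=\tau_0+j(\sigma+2)$, starts the induction at the unique $\tau_{j_0}\in(-2,0)$ on this lattice, and recovers the remaining exponents by interpolation.
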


\begin{proof}
Let $k>0$ and $\epsilon\in(0,1)$. We deduce from~\eqref{eq1} by multiplying by $v(x)/(|x|^2+\epsilon)^k$ and integrating, that
\begin{equation*}
\begin{split}
\int_{\real^N}\frac{|x|^{\sigma}v(x)^2}{(|x|^2+\epsilon)^k}\,dx&-\frac{1}{m-1}\int_{\real^N}\frac{v^{(m+1)/m}(x)}{(|x|^2+\epsilon)^k}\,dx
=\int_{\real^N}\nabla\left(\frac{v(x)}{(|x|^2+\epsilon)^k}\right)\cdot\nabla v(x)\,dx\\
&=\int_{\real^N}\frac{|\nabla v(x)|^2}{(|x|^2+\epsilon)^k}\,dx-2k\int_{\real^N}\frac{v(x)}{(|x|^2+\epsilon)^{k+1}}x\cdot\nabla v(x)\,dx\\
&=\int_{\real^N}\frac{|\nabla v(x)|^2}{(|x|^2+\epsilon)^k}\,dx+k\int_{\real^N}v(x)^2{\rm div}\left(\frac{x}{(|x|^2+\epsilon)^{k+1}}\right)\,dx\\
&=\int_{\real^N}\frac{|\nabla v(x)|^2}{(|x|^2+\epsilon)^k}\,dx+k\int_{\real^N}[N(|x|^2+\epsilon)-2(k+1)|x|^2]\frac{v(x)^2}{(|x|^2+\epsilon)^{k+2}}.
\end{split}
\end{equation*}
We thus derived the following identity
\begin{equation}\label{interm12}
\begin{split}
\int_{\real^N}&\frac{|\nabla v(x)|^2}{(|x|^2+\epsilon)^k}\,dx+\frac{1}{m-1}\int_{\real^N}\frac{v^{(m+1)/m}(x)}{(|x|^2+\epsilon)^k}\,dx\\
&+k\int_{\real^N}\frac{(N-2-2k)|x|^2+N\epsilon}{|x|^2+\epsilon}\frac{v(x)^2}{(|x|^2+\epsilon)^{k+1}}\,dx
=\int_{\real^N}\frac{|x|^{\sigma}v(x)^2}{(|x|^2+\epsilon)^k}\,dx.
\end{split}
\end{equation}
Let us next pick $k\in \big(0,(N-2)/2\big)$ such that $\mathcal{N}_{\sigma-2k}(v)<\infty$. Notice that this last property is at least satisfied by all $k\in \big(0,(N-2)/2\big)\cap \big(0,(\sigma+2)/2\big)$. Then, on the one hand,
$$
\frac{(N-2-2k)|x|^2+N\epsilon}{|x|^2+\epsilon}\geq N-2-2k.
$$
On the other hand,
\begin{equation*}
	\frac{|x|^{\sigma}}{(|x|^2+\epsilon)^k} \le |x|^{\sigma-2k}, \quad x\in\mathbb{R}^N.
\end{equation*}
Inserting the above estimates into~\eqref{interm12}, we further deduce that
\begin{equation*}
\int_{\real^N} \frac{|\nabla v(x)|^2}{(|x|^2+\epsilon)^k}\,dx  +k(N-2-2k)\int_{\real^N}\frac{v(x)^2}{(|x|^2+\epsilon)^{k+1}}\,dx \leq \mathcal{N}_{\sigma-2k}^2(v).
\end{equation*}
Letting $\epsilon\to 0$ in the above inequality and applying Fatou's lemma give
\begin{equation*}
\mathcal{N}_{-2k}^2(|\nabla v|) + k(N-2-2k) \mathcal{N}_{-2k-2}^2(v) \leq \mathcal{N}_{\sigma-2k}^2(v)<\infty.
\end{equation*}
Taking $k=-\big(\tau+2)/2$, we have thus established that
\begin{equation}
	\tau\in (-N,-2) \;\text{ and }\; \mathcal{N}_{\tau+\sigma+2}(v)< \infty \;\Longrightarrow\; \mathcal{N}_\tau(v) < \infty \;\text{ and }\; \mathcal{N}_{\tau+2}(|\nabla v|) < \infty. \label{interm13}
\end{equation}

We now consider $\tau_0\in (-N,-2)$ such that $\tau_0+2 \not\in (\sigma+2)\mathbb{Z}$ and let $j_0\in \mathbb{N}$ be the smallest integer such that $j_0(\sigma+2)>-(\tau_0+2)$. Owing to the negativity of $\tau_0+2$ and $\sigma$, the positivity of $\sigma+2$, and the assumptions on $\tau_0$ and $j_0$, one has
\begin{equation}
	0 < - \frac{\tau_0+2}{\sigma+2} < j_0 < 1 - \frac{\tau_0+2}{\sigma+2} = - \frac{\tau_0-\sigma}{\sigma+2} < - \frac{\tau_0}{\sigma+2}. \label{add01}
\end{equation}
Introducing
\begin{equation*}
	\tau_j := \tau_0 + j (\sigma+2), \quad 1 \le j \le j_0,
\end{equation*}
we infer from~\eqref{add01} and the positivity of $\sigma+2$ that
\begin{equation}
	- N < \tau_0 \le \tau_j \le \tau_{j_0-1} < - 2 < \tau_{j_0}<0, \quad 0\le j \le j_0-1. \label{add02}
\end{equation}
Since $\tau_j\in (-N,-2)$ for $0\le j \le j_0-1$ by~\eqref{add02}, we deduce from~\eqref{interm13} that
\begin{equation}
	\begin{split}
	& \mathcal{N}_{\tau_{j+1}}(v) = \mathcal{N}_{\tau_j+\sigma+2}(v)< \infty\\
	& \qquad  \;\Longrightarrow\; \mathcal{N}_{\tau_j}(v) < \infty \;\text{ and }\; \mathcal{N}_{\tau_j+2}(|\nabla v|) < \infty, \quad 0\le j \le j_0-1. 
	\end{split}\label{add03}
\end{equation}
Since $\mathcal{N}_{\tau_{j_0}}(v)<\infty$ by~\eqref{add02} and Lemma~\ref{lem.embed}, Lemma~\ref{lem.moments} readily follows from~\eqref{add03} for any $\tau_0\in (-N,-2)$ with $\tau_0+2 \not\in (\sigma+2)\mathbb{Z}$. The extension to any $\tau_0\in (-N,0)$ is then done by interpolation.
\end{proof}

We next prove that $v\in L^p(\real^N)$ for any $p\in(1,\infty)$. This is the last piece of information needed in order to obtain the claimed regularity in Theorem~\ref{th.1}.

\begin{lemma}\label{lem.Lp}
Let $N\geq3$ and $v\in \mathcal{X}$ be a non-negative radially symmetric solution to~\eqref{eq1} with non-increasing profile. Then $v\in L^p(\real^N)$ for any $p\in(1,\infty)$.
\end{lemma}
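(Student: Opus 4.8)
The plan is to run a Moser-type iteration on the weak formulation~\eqref{add06}, using the singular term $|x|^\sigma v$ as a right-hand side which we control via the weighted estimates of Lemma~\ref{lem.moments}. Since $v$ is already known to be bounded away from the origin (by the radial lemma from~\cite{BL83}), the only difficulty is the integrability near $x=0$, so all the work is local around the origin and the iteration can be stopped as soon as we reach an exponent $p$ large enough that $W^{2,q}$-elliptic regularity with $q$ close to $N/|\sigma|$ applies (this is what feeds Theorem~\ref{th.2}). Throughout, the absorption term $\frac{1}{m-1}v^{1/m}$ is harmless: it has the favourable sign and, being sublinear, is controlled by $1+v$, so it only helps in the energy estimates.

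First I would fix $\tau\in(-N,0)$ with $\tau>\sigma$ arbitrarily close to $-N$, which is legitimate by Lemma~\ref{lem.moments}, and set up the iteration. For $\beta\ge 1$ and a cut-off $\chi\in C_0^\infty(B(0,2))$ with $\chi\equiv 1$ on $B(0,1)$, test~\eqref{add06} with $\varphi = \chi^2 v_k^{2\beta-1}$, where $v_k:=\min\{v,k\}$ is a truncation making the test function admissible in $H^1$; then let $k\to\infty$ using monotone convergence. The gradient term produces, after the usual algebra, a quantity controlling $\|\nabla(\chi v^\beta)\|_2^2$ up to a constant times $\beta^2\|\nabla\chi\|_\infty^2\|v^\beta\|_{L^2(B(0,2))}^2$, while the right-hand side $\int |x|^\sigma v\,\chi^2 v^{2\beta-1}\,dx = \int |x|^\sigma \chi^2 v^{2\beta}\,dx$ is estimated by H\"older: split $|x|^\sigma = |x|^{\sigma-\tau}\cdot|x|^{\tau}$ and bound $\int |x|^{\tau}(\chi v^\beta)^2\,dx$ via the weighted embedding (Lemma~\ref{lem.embed} applied to $\chi v^\beta$, or directly the CKN inequality) by $\varepsilon\|\nabla(\chi v^\beta)\|_2^2 + C(\varepsilon)\|\chi v^\beta\|_{(m+1)/m}^2$-type terms — the key point being that $|x|^{\sigma-\tau}\le \delta^{\sigma-\tau}$ for $|x|\ge\delta$ and, on $B(0,\delta)$, $\int_{B(0,\delta)}|x|^{\sigma}(\chi v^\beta)^2\le \delta^{\sigma-\tau}\mathcal{N}_\tau^2(\chi v^\beta)$, which is absorbed into the gradient term for $\delta$ small. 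Absorbing the $\varepsilon$-term, the Sobolev inequality~\eqref{Isobolev} then upgrades $\|\nabla(\chi v^\beta)\|_2$ to $\|\chi v^\beta\|_{2^*}$, giving the gain $\|v^\beta\|_{L^{2^*}(B(0,1))} \le C\,\beta\,\|v^\beta\|_{L^2(B(0,2))}$, i.e. $\|v\|_{L^{\beta 2^*}(B(0,1))}$ is controlled by $(C\beta)^{1/\beta}\|v\|_{L^{2\beta}(B(0,2))}$.

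Next I would iterate this estimate over a nested sequence of balls $B(0,1+2^{-j})$ with exponents $p_j = 2(2^*/2)^{j}$, the classical Moser scheme, starting from $p_0=2$ (legitimate since $v\in L^2(\mathbb{R}^N)$, as noted after~\eqref{interm11}). The product $\prod_j (C p_j)^{1/p_j}$ converges because $\sum_j j\,(2/2^*)^j < \infty$, and one concludes that $v\in L^p(B(0,1))$ for every $p\in(1,\infty)$; combined with the known bound on $\mathbb{R}^N\setminus B(0,1)$ — where $v\le C\|v\|_{(m+1)/m}|x|^{-mN/(m+1)}$ is in every $L^p$ away from the origin, hence globally for $p$ large since it decays — this yields $v\in L^p(\mathbb{R}^N)$ for all $p\in(1,\infty)$, as claimed. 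The main obstacle is the very first step: ensuring that the singular weight $|x|^\sigma$ on the right-hand side of each iteration can be absorbed into the left-hand side uniformly in $\beta$, without losing a $\beta$-dependent constant worse than polynomial (which would destroy the convergence of the iteration product). This is precisely where Lemma~\ref{lem.moments} — or rather the companion CKN/weighted-embedding inequality applied to $\chi v^\beta$ together with the smallness of $|x|^{\sigma-\tau}$ near $0$ — is indispensable, since a naive H\"older split against a fixed $L^q$-norm of $|x|^\sigma$ would require $q>N/|\sigma|$ and would not close when $\sigma\le -N/2$; the two-scale splitting at radius $\delta$ circumvents this and keeps all $\beta$-dependence polynomial. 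I would also need the routine but slightly delicate verification that the truncated test function $\chi^2 v_k^{2\beta-1}$ is a legitimate element of $H^1(\mathbb{R}^N)$ (it is, since $v\in L^\infty$ away from $0$ and bounded by $k$ everywhere after truncation) so that the variational identity~\eqref{add06} applies before passing to the limit $k\to\infty$.
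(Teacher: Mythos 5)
Your overall architecture (a Moser-type iteration on truncations with cut-offs and nested balls, absorbing the singular weight by a two-scale splitting at a small radius $\delta$) is viable and genuinely different in organization from the paper's proof: the paper tests directly with the global truncation $T_\varrho(v)^p$ (no cut-offs, since compact support is already known from Proposition~\ref{prop.compact}), and controls $\int |x|^\sigma v\,T_\varrho(v)^p\,dx$ by H\"older against a weighted moment $\mathcal{N}_\kappa(v)$ of the \emph{solution itself}, whose finiteness for all $\kappa\in(-N,0)$ is the content of Lemma~\ref{lem.moments}; this closes in one shot for $N\in\{3,4\}$ and requires a bootstrap $L^q\Rightarrow L^{Nq/(N-4)}$ for $N\ge 5$. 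Your route avoids that dimension split and the delicate exponent bookkeeping, at the price of tracking polynomial $\beta$-dependence through the cut-off algebra.

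There is, however, one step as written that fails: the choice ``$\tau\in(-N,0)$ with $\tau>\sigma$, arbitrarily close to $-N$, legitimate by Lemma~\ref{lem.moments}.'' First, this is self-contradictory ($\sigma>-2>-N$ for $N\ge 3$, so $\tau$ near $-N$ forces $\tau<\sigma$), and the inequality you actually use, $|x|^\sigma\le\delta^{\sigma-\tau}|x|^\tau$ on $B(0,\delta)$ with $\delta^{\sigma-\tau}$ small, holds only when $\tau<\sigma$. Second, and more seriously, Lemma~\ref{lem.moments} cannot justify any choice of $\tau$ here: it is a statement about the solution $v$, proved by testing the equation with $v/(|x|^2+\epsilon)^k$, and it does not transfer to the functions $\chi v^\beta$ to which you must apply the weighted bound inside the iteration. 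For $\chi v^\beta$ the only available tool is the CKN embedding of Lemma~\ref{lem.embed}, which holds precisely for $\tau\in[-2,0)$; for $\tau<-2$ the inequality \eqref{CKN} is false for general elements of $H^1\cap L^{(m+1)/m}$, so ``$\tau$ close to $-N$'' is not an option. The repair is immediate and is exactly the device used in the proof of Proposition~\ref{prop.existSD}: take $\tau\in[-2,\sigma)$, which is nonempty because $\sigma>-2$ (the cleanest choice being $\tau=-2$, i.e.\ Hardy's inequality $\mathcal{N}_{-2}^2(\chi v^\beta)\le C\|\nabla(\chi v^\beta)\|_2^2$ for $N\ge 3$), so that $\delta^{\sigma-\tau}\to 0$ as $\delta\to 0$ and the near-origin contribution is absorbed with at worst polynomial loss in $\beta$. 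With that correction your iteration converges and the lemma follows; as in the paper, you still need compact support (or interpolation with $L^{(m+1)/m}$) to cover the exponents $p$ below $(m+1)/m$, since the decay $v(x)\le C|x|^{-mN/(m+1)}$ alone is not integrable to such powers at infinity.
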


\begin{proof}
For $\varrho>0$, we introduce the truncation $T_\varrho(z)=\min\{z,\varrho\}$, $z>0$. For $p>1$, we infer from~\eqref{eq1} that
$$
-\int_{\real^N}T_{\varrho}(v(x))^p \Delta v(x)\,dx + \int_{\real^N}\frac{T_{\varrho}(v(x))^p}{m-1} v^{1/m}(x)\,dx = \int_{\real^N}|x|^{\sigma} v(x) T_{\varrho}(v(x))^p\,dx,
$$
whence, thanks to the non-negativity of the second term in the left hand side,
\begin{equation}\label{interm15}
p\int_{\real^N}T_{\varrho}(v(x))^{p-1} T_{\varrho}'(v(x)) |\nabla v(x)|^2\,dx \leq \int_{\real^N}|x|^{\sigma} v(x) T_{\varrho}(v(x))^p\,dx.
\end{equation}
Taking into account that $(T_{\varrho}')^2=T_{\varrho}'$, we deduce from~\eqref{interm15} that
\begin{equation*}
	\frac{4p}{(p+1)^2}\|\nabla T_{\varrho}(v)^{(p+1)/2}\|_2^2 \leq \int_{\real^N} |x|^{\sigma} v(x) T_{\varrho}(v(x))^p\,dx.
\end{equation*}
Combining the above inequality with Sobolev's inequality~\eqref{Isobolev} and the elementary inequality $4p\ge 2(p+1)$, we obtain
\begin{equation}\label{interm16}
\frac{2}{(p+1)C_S^2} \left\|T_{\varrho}(v)^{(p+1)/2}\right\|_{2^*}^2 \leq \int_{\real^N} |x|^{\sigma} v(x) T_{\varrho}(v(x))^p\,dx.
\end{equation}
We now have to estimate the right hand side of~\eqref{interm16} and argue differently for $N\in\{3,4\}$ and $N\ge 5$.

\noindent\textbf{Case~1: $N\in\{3,4\}$.} We set
\begin{equation*}
	\theta := \frac{N(p+1)(p-1)}{p(Np+4-N)} \in (0,1),
\end{equation*}
since $p>1$, $N\in\{3,4\}$, and
\begin{equation*}
	1 - \theta = \frac{p(4-N)+N}{p(Np+4-N)}>0.
\end{equation*}
As $T_\varrho(v)\le v$,
\begin{align*}
	& \int_{\real^N} |x|^{\sigma} v(x) T_{\varrho}(v(x))^p\,dx \\
	& \qquad = \int_{\real^N} |x|^{\sigma} v(x) T_{\varrho}(v(x))^{p(1-\theta)} T_{\varrho}(v(x))^{p\theta}\,dx \\
	& \qquad \le \int_{\real^N} |x|^{\sigma} v(x)^{1+p(1-\theta)} \left[ T_{\varrho}(v(x))^{(p+1)/2} \right]^{2p\theta/(p+1)}\,dx \\
	& \qquad = \int_{\real^N} |x|^{\sigma} v(x)^{4(p+1)/(Np+4-N)} \left[ T_{\varrho}(v(x))^{(p+1)/2} \right]^{2N(p-1)/(Np+4-N)}\,dx.
\end{align*}
Since
\begin{equation*}
	\frac{2(p+1)}{Np+4-N} + \frac{(N-2)(p-1)}{Np+4-N} = 1,
\end{equation*}
we may apply H\"older's inequality to obtain
\begin{align*}
	 \int_{\real^N} |x|^{\sigma} v(x) T_{\varrho}(v(x))^p\,dx & \le \mathcal{N}_\kappa(v)^{4(p+1)/(Np+4-N)} \\
	 & \hspace{2cm} \times \left\| T_{\varrho}(v)^{(p+1)/2} \right\|_{2^*}^{2^*(N-2)(p-1)/(Np+4-N)},
\end{align*}
with $\kappa := \sigma(Np+4-N)/[2(p+1)]<0$. Note that the constraints $\sigma>-2>-N$ and $p>1$ guarantee that
\begin{equation*}
	\kappa+N = \frac{N(p-1)(\sigma+2) + 4(\sigma+N)}{2(p+1)} > 0,
\end{equation*}
so that $\mathcal{N}_\kappa(v)<\infty$ by Lemma~\ref{lem.moments}.

Combining~\eqref{interm16} and the above inequality gives
\begin{equation*}
	\left\|T_{\varrho}(v)^{(p+1)/2}\right\|_{2^*}^2 \le C(p,v) \left\| T_{\varrho}(v)^{(p+1)/2} \right\|_{2^*}^{2^*(N-2)(p-1)/(Np+4-N)},
\end{equation*}
whence
\begin{equation*}
	\|T_\varrho(v)\|_{N(p+1)/(N-2)}^{4(p+1)/(Np+4-N)} = \left\|T_{\varrho}(v)^{(p+1)/2}\right\|_{2^*}^{8/(Np+4-N)} \le C(p,v).
\end{equation*}
Since the right hand side of the above inequality does not depend on $\varrho>0$, we may let $\varrho\to\infty$ in the above inequality and use Fatou's lemma to conclude that $v\in L^{N(p+1)/(N-2)}(\mathbb{R}^N)$. Since $p>1$ is arbitrary, we have thus established that $v\in L^p(\mathbb{R}^N)$ for all $p\in (2^*,\infty)$, which implies, together with the compactness of the support of $v$ proved in Proposition~\ref{prop.compact}, that $v\in L^p(\mathbb{R}^N)$ for all $p\in [1,\infty)$.

\noindent\textbf{Case~2: $N\ge 5$.} In higher space dimensions, it does not seem possible to derive directly the $L^p$-integrability of $v$ for arbitrary values of $p>1$ and we instead use an iterative argument. As a first step, we consider $p>\max\{1,2/(N-4)\}$ such that $v\in L^{(N-4)(p+1)/(N-2)}(\mathbb{R}^N)$. By H\"older's inequality,
\begin{align*}
	\int_{\real^N} & |x|^{\sigma} v(x) T_{\varrho}(v(x))^p\,dx \\
	& = \int_{\real^N} |x|^{\sigma} v(x)^{4/N} v(x)^{(N-4)/N} \left[ T_{\varrho}(v(x))^{(p+1)/2} \right]^{2p/(p+1)}\,dx\\
	& \leq \mathcal{N}_{\sigma N/2}(v)^{4/N} \|v\|_{(N-4)(p+1)/(N-2)}^{(N-4)/N} \left\| T_{\varrho}(v)^{(p+1)/2}\right\|_{2^*}^{2p/(p+1)},
\end{align*}
since
\begin{equation*}
	\frac{2}{N} + \frac{N-2}{N(p+1)} + \frac{2p}{2^*(p+1)} = 1.
\end{equation*}
Observing that $\sigma N/2>-N$, it follows from Lemma~\ref{lem.moments} that $\mathcal{N}_{\sigma N/2}(v)$ is finite and we deduce from the above inequality that
\begin{equation*}
	\int_{\real^N} |x|^{\sigma} v(x) T_{\varrho}(v(x))^p\,dx \le C_0(v) \|v\|_{(N-4)(p+1)/(N-2)}^{(N-4)/N} \left\| T_{\varrho}(v)^{(p+1)/2}\right\|_{2^*}^{2p/(p+1)},
\end{equation*}
with $C_0(v):= \mathcal{N}_{\sigma N/2}(v)^{4/N}$. We then infer from~\eqref{interm16} and the above inequality that
\begin{equation*}
	\left\|T_{\varrho}(v)^{(p+1)/2}\right\|_{2^*}^2 \leq \frac{(p+1) C_S^2}{2} C_0(v) \|v\|_{(N-4)(p+1)/(N-2)}^{(N-4)/N} \left\| T_{\varrho}(v)^{(p+1)/2}\right\|_{2^*}^{2p/(p+1)},
\end{equation*}
from which we deduce that
\begin{equation*}
	\|T_{\varrho}(v)\|_{N(p+1)/(N-2)} = \left\|T_{\varrho}(v)^{(p+1)/2}\right\|_{2^*}^{2/(p+1)} \le (p+1) C_S^2 C_0(v) \|v\|_{(N-4)(p+1)/(N-2)}^{(N-4)/N}.
\end{equation*}
Since the right hand side of the above inequality is assumed to be finite and does not depend on $\varrho>0$, we may let $\varrho\to\infty$ and use Fatou's lemma to conclude that $v\in L^{N(p+1)/(N-2)}(\mathbb{R}^N)$ with
\begin{equation*}
	\|v\|_{N(p+1)/(N-2)} \le (p+1) C_S^2 C_0(v) \|v\|_{(N-4)(p+1)/(N-2)}^{(N-4)/N}.
\end{equation*}
Setting $q=(N-4)(p+1)/(N-2)$, we have shown that, for $q>\max\{1,2(N-4)/(N-2)\}$,
\begin{equation}
	v\in L^q(\mathbb{R}^N) \Longrightarrow \left\{
	\begin{array}{l}
		v\in L^{Nq/(N-4)}(\mathbb{R}^N) \\
		\;\text{ with }\; \|v\|_{Nq/(N-4)} \le \frac{q(N-2)}{N-4} C_S^2 C_0(v) \|v\|_{q}^{(N-4)/N}
	\end{array} \right. \label{add04}
\end{equation}
Introducing the sequence
\begin{equation*}
	q_j = 2 \left( \frac{N}{N-4} \right)^j, \quad j\ge 0,
\end{equation*}
and recalling that $v\in L^2(\mathbb{R}^N)$, we infer from~\eqref{add04} by induction that $v\in L^{q_j}(\mathbb{R}^N)$ for all $j\ge 0$. Since $q_j\to\infty$ as $j\to\infty$, an interpolation argument entails that $v\in L^p(\mathbb{R}^N)$ for all $p\in [2,\infty)$, while the $L^p$-integrability of $v$ for $p\in [1,2)$ follows from its $L^2$-integrability and the compactness of its support.
\end{proof}

Notice that we cannot yet deduce at this stage that $v\in L^{\infty}(\real^N)$ for $N\ge 2$. Indeed, the estimates used in the proof of Lemma~\ref{lem.Lp} feature an unbounded dependence on $p$ when $N\ge 3$ and the Sobolev embedding only provides that $v\in L^p(\real^N)$ for any $p\in(1,\infty)$ when $N=2$. Thus, in order to obtain both the boundedness for $N\geq2$ and the regularity claimed in Theorem~\ref{th.2}, we still need to perform one more step, as indicated below.

\begin{proof}[Proof of Theorem~\ref{th.2}: regularity]
Let $N\ge 1$ and let $v\in \mathcal{X}$ be a non-negative radially symmetric solution to~\eqref{eq1} with non-increasing profile. For $p\in [1,\infty)$,  we have $v\in L^p(\real^N)$ by Lemma~\ref{lem.Lp} for $N\ge 3$ and by Sobolev embeddings for $N\in\{1,2\}$. Therefore, for $x\in\mathbb{R}^N$,
\begin{equation*}
\omega_N|x|^Nv(x)\leq\int_{B(0,|x|)}v(x)\,dy\leq\int_{B(0,|x|)}v(y)\,dy\leq\|v\|_{p}(\omega_N|x|^{N})^{(p-1)/p}.
\end{equation*}
We thus infer that
\begin{equation}\label{interm19}
|x|^{\sigma}v(x)\leq\omega_N^{-1/p}|x|^{(\sigma p-N)/p}\|v\|_p, \quad x\in\mathbb{R}^N.
\end{equation}
Fix now $q\in(N/2,N/|\sigma|)$, which is possible since $\sigma\in(-2,0)$. Then, since $v$ is compactly supported inside a ball $B(0,R_0)$ by Proposition~\ref{prop.compact}, we deduce from~\eqref{interm19} that
\begin{align*}
\int_{\real^N}(|x|^{\sigma}v(x))^q\,dx & \leq\omega_N^{-q/p}\int_{B(0,R_0)}|x|^{q(\sigma p -N)/p}\|v\|_p^{q}\,dx \\
& \leq C(p,v)\int_0^{R_0}r^{q(\sigma p-N)/p+N-1}\,dr.
\end{align*}
We now choose $p>Nq/(N+q\sigma)>1$, so that the right hand side of the above estimate is finite. Therefore, $x\mapsto|x|^{\sigma}v(x)$ belongs to $L^q(\real^N)$ for any $q\in(N/2,N/|\sigma|)$. Moreover, since also $v^{1/m}\in L^q(\real^N)$, we deduce from~\eqref{eq1} that $-\Delta v\in L^q(\real^N)$, which, together with the compactness of the support of $v$ and standard elliptic regularity, ensure that $v\in W^{2,q}(\real^N)$. As $q$ is arbitrary in the interval $(N/2,N/|\sigma|)$, the continuous embedding of $W^{2,q}(\real^N)$ into $L^{\infty}(\real^N)$ implies that $v\in L^{\infty}(\real^N)$. In fact, we can even extend this regularity to $v\in C^{\alpha}(\real^N)$ for $\alpha=2-N/q$ if $\sigma\in(-2,-1]$ and to $v\in C^{1,\alpha}(\real^N)$ for $\alpha=1-N/q$ provided $\sigma\in(-1,0)$ (and choosing $q\in(N,N/|\sigma|)$).
\end{proof}

\subsection{Expansion as $|x|\to 0$}\label{subsec.ez}

The aim of this section is to compute the local expansion of $v$ in a neighborhood of $x=0$ and thus complete the proof of Theorem~\ref{th.2}. Recalling the notation $V(r)=V(|x|)=v(x)$, $r=|x|$, for the profile of $v$, and owing to the embedding of $W^{2,q}(\real^N)$ in $BC(\real^N)$ for $q\in(\max\{1,N/2\},N/|\sigma|)$, we infer that $v(0)=V(0)$ is well defined. Since $v$ is radially symmetric, \eqref{eq1} becomes an ordinary differential equation for $V$ with respect to the radial variable $r=|x|$; that is,
\begin{equation}\label{eq1.radial}
	\big(r^{N-1}V'(r)\big)'=\frac{r^{N-1}}{m-1}V^{1/m}(r)-r^{N-1+\sigma}V(r), \quad r\in (0,\infty).
\end{equation}
We now proceed with the final step of the proof of Theorem~\ref{th.2}. 

\begin{proof}[Proof of Theorem~\ref{th.2}: expansion as $|x|\to0$]
Since $\sigma<0$ and $V$ is bounded, we deduce from \eqref{eq1.radial} that, on the one hand,
\begin{equation}\label{interm20}
	(r^{N-1}V'(r))'\sim-r^{N+\sigma-1}V(r) \;\;\text{ as }\;\; r\to 0,
\end{equation}
while, on the other hand, the radial symmetry and the condition $\sigma+N>0$ entail
$$
\lim\limits_{r\to0}r^{N-1}V'(r)=\lim\limits_{r\to0}\frac{r^{N+\sigma}}{N+\sigma}=0.
$$
We infer by combining the previous limit with~\eqref{interm20} and the L'Hospital rule that
$$
\lim\limits_{r\to 0} \frac{(N+\sigma)r^{N-1}V'(r)}{r^{N+\sigma}}=-V(0),
$$
or equivalently,
\begin{equation}\label{interm21}
	V'(r)\sim-\frac{V(0)}{N+\sigma}r^{\sigma+1} \quad {\rm as} \ r\to0.
\end{equation}
Taking into account that $\sigma\in(-2,0)$, we next get by integration from~\eqref{interm21} that
\begin{equation}\label{interm22}
	V(r) = V(0)-\frac{V(0)}{(N+\sigma)(\sigma+2)}r^{\sigma+2} +o(r^{\sigma+2}).
\end{equation}
We next go to the second order derivative and we deduce from~\eqref{interm20}, \eqref{interm21}, and~\eqref{interm22} that 
\begin{equation*}
	\begin{split}
		r^{N-1}V''(r)&=-(N-1)r^{N-2}V'(r)+(r^{N-1}V'(r))'\\
		&=-(N-1)r^{N-2}V'(r) - r^{N-1+\sigma}V(r) + o(r^{N-1+\sigma})\\
		&=\frac{N-1}{N+\sigma}V(0)r^{N+\sigma-1}-V(0)r^{N+\sigma-1}+o(r^{N+\sigma-1}),
	\end{split}
\end{equation*}
whence
\begin{equation}\label{interm23}
	V''(r)=-\frac{\sigma+1}{N+\sigma}V(0)r^{\sigma}+o(r^{\sigma}).
\end{equation}
At this point, let us remark that the expansion~\eqref{interm23} provides no information on $V''$ when $\sigma=-1$. Thus, we are led to study the next order of the expansion. We thus replace the first order expansion~\eqref{interm22} into~\eqref{eq1.radial} and obtain
\begin{equation*}
	\begin{split}
		(r^{N-1}V'(r))'&=\frac{r^{N-1}}{m-1}V(0)^{1/m}\left[1-\frac{r^{\sigma+2}}{m(N+\sigma)(\sigma+2)}+o(r^{\sigma+2})\right]\\
		&-r^{N+\sigma-1}\left[V(0)-\frac{V(0)}{(N+\sigma)(\sigma+2)}r^{\sigma+2}+o(r^{\sigma+2})\right]\\
		&=-V(0)r^{N+\sigma-1}+\frac{V(0)^{1/m}}{m-1}r^{N-1}+o(r^{N-1})\\
		&+\frac{V(0)}{(N+\sigma)(\sigma+2)}r^{N+2\sigma+1}+o(r^{N+2\sigma+1}).
	\end{split}
\end{equation*}
We next split the rest of our analysis into the three cases mentioned in the statement of Theorem \ref{th.2}.

\medskip

\noindent \textbf{Case 1: $\sigma\in(-1,0)$.} We observe that $N+2\sigma+1>N-1$, whence
\begin{equation}\label{interm25}
	(r^{N-1}V'(r))'=-V(0)r^{N+\sigma-1}+\frac{V(0)^{1/m}}{m-1}r^{N-1}+o(r^{N-1}),
\end{equation}
and by integration we further infer that
\begin{equation}\label{interm24}
	V'(r)=-\frac{V(0)}{N+\sigma}r^{\sigma+1}+\frac{V(0)^{1/m}}{N(m-1)}r+o(r).
\end{equation}
Since
\begin{equation}\label{eq1.second}
	V''(r)=-\frac{N-1}{r}V'(r)+\frac{1}{r^{N-1}}(r^{N-1}V'(r))',
\end{equation}
we readily obtain the expansion~\eqref{V0_1} by replacing the terms in the right hand side of~\eqref{eq1.second} by their expansions in~\eqref{interm25} and~\eqref{interm24}.

\medskip

\noindent \textbf{Case 2: $\sigma\in(-2,-1)$.} We observe that $N+2\sigma+1<N-1$, whence
\begin{equation}\label{interm26}
	(r^{N-1}V'(r))'=-V(0)r^{N+\sigma-1}+\frac{V(0)}{(N+\sigma)(\sigma+2)}r^{N+2\sigma+1}+o(r^{N+2\sigma+1}),
\end{equation}
and by integration, taking into account that $N+2\sigma+2=N+\sigma+\sigma+2>0$, we further infer that
\begin{equation}\label{interm27}
	V'(r)=-\frac{V(0)}{N+\sigma}r^{\sigma+1}+\frac{V(0)}{(N+\sigma)(\sigma+2)(N+2\sigma+2)}r^{2\sigma+3}+o(r^{2\sigma+3}).
\end{equation}
We then readily obtain the expansion~\eqref{V0_3} by replacing the terms in the right hand side of \eqref{eq1.second} by their expansions in~\eqref{interm26} and~\eqref{interm27} and performing some straightforward calculations.

\medskip

\noindent \textbf{Case 3: $\sigma=-1$.} In this case, $N+2\sigma+1=N-1$, whence
\begin{equation}\label{interm28}
	(r^{N-1}V'(r))'=-V(0)r^{N-2}+\left[\frac{V(0)}{N-1}+\frac{V(0)^{1/m}}{m-1}\right]r^{N-1}+o(r^{N-1}),
\end{equation}
from which we deduce by integration that
\begin{equation}\label{interm29}
	V'(r)=-\frac{V(0)}{N-1}+\left[\frac{V(0)}{N-1}+\frac{V(0)^{1/m}}{m-1}\right]\frac{r}{N}+o(r).
\end{equation}
We again obtain the expansion~\eqref{V0_2} by replacing the terms in the right hand side of~\eqref{eq1.second} by their expansions in~\eqref{interm28} and~\eqref{interm29}. The proof of Theorem~\ref{th.2} is now complete.
\end{proof}

\begin{remark}\label{rem.reg}
	As already pointed out in the introduction, the expansions~\eqref{V0_1} and~\eqref{V0_3}, along with the negativity of $\sigma$, show that we cannot expect the profile $V$ to be $C^2$-smooth at $r=0$. Furthermore, if $N\geq2$ and $\sigma\in(-2,-1)$, then even the $C^1$-regularity of $V$ fails to be true. However, due to the fact that the first order of the expansion vanishes for $\sigma=-1$, we surprisingly have $V\in C^2([0,\infty))$ in this case, as shown by the expansion~\eqref{V0_2}. This discussion concludes the sharpness of Theorem~\ref{th.2} with respect to the regularity properties of $V$.
\end{remark}

\section{Non-existence}\label{sec.ne}

This section is dedicated to the proof of Theore~ \ref{th.3} by means of a Pohozaev identity that will be deduced below following analogous steps as in \cite{BL83, FT00}.
\begin{proof}[Proof of Theorem~\ref{th.3}]
	As usual, we multiply equation~\eqref{eq1} by $x\cdot\nabla v$ and integrate by parts. We thus have
	\begin{equation*}
		\begin{split}
			-\int_{\real^N}(x\cdot\nabla v)\Delta v\,dx&=\int_{\real^N}\nabla v\cdot\nabla(x\cdot \nabla v)\,dx
			=\int_{\real^N}\sum\limits_{i=1}^{N}\partial_iv\partial_i\left( \sum\limits_{j=1}^Nx_j\partial_jv \right)\,dx\\
			&=\sum\limits_{i,j=1}^N\int_{\real^N}(x_j\partial_iv\partial_j\partial_iv+\delta_{ij}\partial_iv\partial_jv)\,dx\\
			&=\frac{1}{2}\int_{\real^N}x\cdot\nabla(|\nabla v|^2)\,dx+\|\nabla v\|_{2}^2\\
			&=-\frac{N}{2}\|\nabla v\|_2^2+\|\nabla v\|_2^2=-\frac{N-2}{2}\|\nabla v\|_2^2.
		\end{split}
	\end{equation*}
	Similarly,
	$$
	\int_{\real^N}(x\cdot\nabla v)|x|^{\sigma}v\,dx=\frac{1}{2}\int_{\real^N}|x|^{\sigma}x\cdot\nabla v^2\,dx=-\frac{N+\sigma}{2} \mathcal{N}_{\sigma}^2(v),
	$$
	and
	$$
	\int_{\real^N}(x\cdot\nabla v)v^{1/m}\,dx=\frac{m}{m+1}\int_{\real^N}x\cdot\nabla v^{(m+1)/m}\,dx=-\frac{mN}{m+1}\int_{\real^N}v^{(m+1)/m}\,dx.
	$$
	Gathering the previous identities, we infer from~\eqref{eq1} that
	\begin{equation}\label{poh1}
		-\frac{N-2}{2}\|\nabla v\|_{2}^2-\frac{mN}{(m+1)(m-1)}\|v\|_{(m+1)/m}^{(m+1)/m}=-\frac{N+\sigma}{2} \mathcal{N}_{\sigma}^2(v).
	\end{equation}
	By multiplying equation~\eqref{eq1} by $v$ and integrating over $\real^N$, we derive the following identity
	\begin{equation}\label{poh2}
		\|\nabla v\|_{2}^2+\frac{1}{m-1}\|v\|_{(m+1)/m}^{(m+1)/m}= \mathcal{N}_{\sigma}^2(v).
	\end{equation}
	We proceed as in \cite[Section~5]{FT00} and we multiply~\eqref{poh2} by $(N+\sigma)/2$ and add the resulting equality to~\eqref{poh1} in order to eliminate the terms in the right hand side. We thus obtain
	\begin{equation}\label{poh3}
		\frac{\sigma+2}{2}\|\nabla v\|_2^{2}+\frac{\sigma(m+1)-N(m-1)}{2(m^2-1)}\|v\|_{(m+1)/m}^{(m+1)/m}=0.
	\end{equation}
	We next observe that, as $\sigma\leq-2$, the coefficient of the first term in~\eqref{poh3} is non-positive, while the coefficient of the second term in~\eqref{poh3} is negative. This readily implies that $\|v\|_{(m+1)/m}=0$, hence $v\equiv0$.
\end{proof}

\section*{Acknowledgements} This work is partially supported by the Spanish project PID2020-115273GB-I00 and by the Grant RED2022-134301-T (Spain). Part of this work has been developed during visits of R. G. I. to Laboratoire de Math\'ematiques LAMA, Universit\'e de Savoie, and of Ph. L. to Universidad Rey Juan Carlos, and both authors thank these institutions for hospitality and support.

\bibliographystyle{plain}

\end{document}